\documentclass[11pt]{amsart}

\usepackage[T1]{fontenc}
\usepackage[utf8]{inputenc}
\usepackage{lmodern}
\usepackage{amsmath,amssymb,amsthm,mathtools}
\usepackage{microtype}
\usepackage{tikz}
\usetikzlibrary{fadings}
\usepackage{booktabs}
\usepackage[hidelinks]{hyperref}
\usepackage[table]{xcolor}

\allowdisplaybreaks

\theoremstyle{plain}
\newtheorem{theorem}{Theorem}[section]
\newtheorem{lemma}[theorem]{Lemma}
\newtheorem{proposition}[theorem]{Proposition}
\newtheorem{corollary}[theorem]{Corollary}
\newtheorem{conjecture}[theorem]{Conjecture}

\theoremstyle{remark}
\newtheorem{remark}[theorem]{Remark}

\title[Affine Scaling of Jacobi Zeros]
{Affine Scaling of Jacobi Zeros: Sharp Orderings Beyond Gautschi's Conjectures}

\author{K. Castillo}

\address{CMUC, Department of Mathematics, University of Coimbra,
3000-143 Coimbra, Portugal}

\email{math@keniercastillo.com}

\author{F. R. Rafaeli}

\address{Instituto de Matemática e Estatística,
Universidade Federal de Uberlândia (IME-UFU),
Uberlândia, Minas Gerais, Brazil}

\email{rafaeli@ufu.br}

\date{}

\subjclass[2020]{Primary 33C45; Secondary 34C10, 65D32, 26D05}

\keywords{Jacobi polynomials, zeros, Sturm comparison theorem,
Gautschi's conjectures, affine scaling, Liouville normal form,
spherical cubature}

\begin{document}

\begin{abstract}
We settle two conjectures of Gautschi on the degree dependence of the zeros of the Jacobi polynomials $P_n^{(\alpha,\beta)}$, $\alpha,\beta>-1$, and obtain results substantially stronger than those conjectured. The conjectures stem from a line of questions originating in spherical cubature and hyperinterpolation. A Liouville transformation and Sturm comparison yield affine comparison principles with exact thresholds for the pointwise monotonicity of the rescaled potential. We prove that an increasing affine ordering with a degree-independent shift exists if and only if $|\beta|\leq1/2$. For the spectral scale $n+(\alpha+\beta+1)/2$, we determine the exact parameter regions for the two opposite orderings and show that no uniform spectral ordering is possible outside them. We also characterise all equality cases and derive finite-degree bounds in terms of Bessel zeros. The resulting classifications are exact and cannot be enlarged: outside the stated parameter regions the corresponding uniform zero orderings necessarily fail.
\end{abstract}

\maketitle

\section{Introduction}

We write $P_n^{(\alpha,\beta)}$ for the Jacobi polynomial of degree
$n$ in the conventional normalisation
\[
P_n^{(\alpha,\beta)}(1)
=
\frac{(\alpha+1)_n}{n!}.
\]
Throughout the paper we assume that $\alpha,\beta>-1$. In this
Jacobi orthogonality range, the zeros of
$P_n^{(\alpha,\beta)}$ are simple and lie in $(-1,1)$. We write
\[
x_{n,k}^{(\alpha,\beta)}
=\cos\theta_{n,k}^{(\alpha,\beta)},
\quad
0<\theta_{n,1}^{(\alpha,\beta)}
<\cdots<
\theta_{n,n}^{(\alpha,\beta)}
<\pi.
\]
Our purpose is to compare these angles in consecutive degrees after
an affine rescaling of the degree. The two conjectures formulated by
Gautschi in~\cite{Gautschi2009a,Gautschi2009b} provide the principal
motivation for this study. 

These questions originated in polynomial comparisons motivated by
numerical analysis on spheres. Positive-weight cubature formulae are
used both to approximate surface integrals and to construct
hyperinterpolation operators, in which the Fourier--Laplace
coefficients of the orthogonal projection are replaced by their
cubature approximations. Under an additional quadrature-regularity
hypothesis, Le Gia and Sloan proved that, in arbitrary dimension, the
uniform norm of the hyperinterpolation operator has the same optimal
order of growth as that of the corresponding orthogonal
projection~\cite{LeGiaSloan}. The condition is local: for a rule with
$N$ nodes on a $d$-dimensional sphere, it bounds the total quadrature
weight carried by every spherical cap of angular radius $N^{-1/d}$.
Reimer independently obtained the higher-dimensional estimate and
proved that no additional regularity hypothesis is needed for
positive-weight rules admissible for hyperinterpolation; a key
ingredient is a universal cap-weight estimate at inverse
polynomial-degree scale~\cite{Reimer}. Hesse and Sloan later
introduced the related degree-scale condition Property~(R) and used
it in their analysis of worst-case cubature error in Sobolev spaces
on the two-sphere~\cite{HesseSloan2005,HesseSloan2006}.

Leopardi~\cite{Leopardi} subsequently sought explicit estimates for
the constants in Property~(R). For $\xi,\eta\in S^d$, where
$d\geq2$, the reproducing kernel of the spherical polynomial space of
degree at most $n$ is a positive constant multiple of
\[
\frac{P_n^{(d/2,d/2-1)}\bigl(\langle\xi,\eta\rangle\bigr)}
     {P_n^{(d/2,d/2-1)}(1)}.
\]
If $\varphi$ denotes their angular distance, then
$\langle\xi,\eta\rangle=\cos\varphi$. Consequently, his local
cap-weight estimate depends on this normalised Jacobi polynomial and
on its largest zero. The argument also uses Jacobi polynomials of the
adjacent type $P_n^{(1+d/2,d/2)}$, together with a Sturm
comparison~\cite[(2.3), Lemma~3.1, and Theorem~5.1]{Leopardi}.
Leopardi
conjectured that, for $\alpha\geq1/2$, $n\geq1$, and
$0<\theta<\theta_{1,1}^{(\alpha,\alpha-1)}$,
\[
\frac{
P_n^{(\alpha,\alpha-1)}
\left(\cos\frac{\theta}{n}\right)
}{
P_n^{(\alpha,\alpha-1)}(1)
}
<
\frac{
P_{n+1}^{(\alpha,\alpha-1)}
\left(\cos\frac{\theta}{n+1}\right)
}{
P_{n+1}^{(\alpha,\alpha-1)}(1)
}.
\]
For $\alpha=d/2$, this comparison would improve his explicit
Property~(R) constants by the multiplicative factor
$(\operatorname{sinc}(\delta/2))^{d+1}$, where the spherical caps
have angular radius $\delta/m$, with $m$ denoting the degree of
exactness of the cubature rule, and
$\operatorname{sinc}u=\sin u/u$~\cite[Conjecture~5.2 and
Corollary~5.3]{Leopardi}. This proposed improvement depends on the
polynomial-value comparison and does not follow from a comparison of
the zeros alone.

Gautschi and Leopardi studied the preceding polynomial inequality
alongside the corresponding largest-zero inequality
\[
n\theta_{n,1}^{(\alpha,\beta)}
<
(n+1)\theta_{n+1,1}^{(\alpha,\beta)},
\]
and extended both questions to general Jacobi
parameters~\cite{GautschiLeopardi}. For the general-parameter
polynomial inequality, Koumandos proved the comparison for every
$n\geq1$ and $0<\theta<\pi$ at the three Chebyshev parameter
pairs~\cite{Koumandos}:
\[
(\alpha,\beta)\in
\left\{
\left(\frac12,\frac12\right),
\left(\frac12,-\frac12\right),
\left(-\frac12,\frac12\right)
\right\}.
\]
The two problems are logically distinct: one
compares normalised polynomial values, whereas the other compares
first angular zeros. The present paper concerns only the latter.
After this joint work, Gautschi refined the conjectured parameter
domain for the largest-zero inequality and then extended the
comparison to every zero~\cite{Gautschi2008,Gautschi2009a}. On the basis of the computations
reported in~\cite[Section~3]{Gautschi2009a}, he proposed the following
parameter region
\[
\mathcal D_1
\coloneqq
\left\{
(\alpha,\beta)\in(-1,\infty)^2:
|\beta|\leq\frac12,\,\,
\alpha+\beta+1\geq0
\right\}
\setminus
\left\{
\left(-\frac12,-\frac12\right)
\right\}.
\]

\begin{conjecture}[Gautschi, 2009]\label{conj:first}
For every $(\alpha,\beta)\in\mathcal D_1$, $n\geq1$, and
$1\leq k\leq n$,
\[
n\theta_{n,k}^{(\alpha,\beta)}
<
(n+1)\theta_{n+1,k}^{(\alpha,\beta)}.
\]
\end{conjecture}

Prompted by a question of Askey, recorded
in~\cite[Section~1]{Gautschi2009b}, Gautschi next investigated the
shifted factor
\[
\rho\coloneqq\frac{\alpha+\beta+1}{2},
\quad
\gamma_n\coloneqq n+\rho.
\]
For this shifted scale, the conjectured inequality is reversed. Put
\[
\mathcal D_2
\coloneqq
\left\{
(\alpha,\beta)\in(-1,\infty)^2:
|\alpha|\geq\frac12,\,\,
|\beta|\geq\frac12
\right\}
\setminus
\left\{-\frac12,\frac12\right\}^{\!2}.
\]

\begin{conjecture}[Gautschi, 2009]\label{conj:second}
For every $(\alpha,\beta)\in\mathcal D_2$, $n\geq1$, and
$1\leq k\leq n$,
\[
\gamma_{n+1}\theta_{n+1,k}^{(\alpha,\beta)}
<
\gamma_n\theta_{n,k}^{(\alpha,\beta)}.
\]
\end{conjecture}

Ahmed, Laforgia, and Muldoon~\cite[Theorem~3.1(ii)]
{AhmedLaforgiaMuldoon} had already proved the reverse inequality
throughout the square $|\alpha|,|\beta|\leq1/2$, apart from the four
equality cases; Gautschi later drew attention to this earlier
result~\cite{Gautschi2011}. Lun and Rafaeli~\cite{LunRafaeli}
subsequently used Sturm comparison to establish the inequality in
Conjecture~\ref{conj:first} on substantial parameter subregions and
treated Conjecture~\ref{conj:second}
in~\cite[Theorem~3]{LunRafaeli}. After the Liouville transformation,
the relevant solution is asymptotic to a constant multiple of
$\theta^{\alpha+1/2}$; it
therefore diverges when $-1<\alpha<-1/2$, while for
$\alpha=-1/2$ it tends to a nonzero constant. In their proof of
Theorem~3~\cite[p.~562]{LunRafaeli}, verification of the endpoint
Wronskian condition is reduced to leading asymptotic equivalences.
Those equivalences alone do not provide the derivative control needed
to establish the cancellation, and the cases $\alpha=\pm1/2$ are
not treated there. Lemma~\ref{lem:wronskian} supplies the required
endpoint verification throughout Gautschi's conjectured domain.

Our method also replaces the two particular scales $n$ and
$\gamma_n$ by the affine scale $n+\sigma$, where $\sigma$ may depend
on the Jacobi parameters but not on the degree. Both conjectures then
become consequences of a single differential comparison. The
resulting parameter-dependent thresholds yield ranges in which the
scaled zeros are monotone in the degree; they also characterise all
cases of equality and provide finite-degree bounds in terms of Bessel
zeros. These thresholds are optimal for the pointwise comparison of
the interpolating potentials; for general affine scales, they are not
asserted to be necessary for inequalities between individual zeros.
Corollary~\ref{cor:increasing-affine-classification} nevertheless shows
that $|\beta|\leq1/2$ is the exact condition for the existence of an
increasing affine comparison with a degree-independent shift. For the
spectral scale $n+\rho$,
Proposition~\ref{prop:spectral-failure} and
Corollary~\ref{cor:spectral-classification} show that the two spectral
parameter regions below are exact.

The proof begins in Section~\ref{sec:normal-form} by placing the
Jacobi equation in Liouville normal form and rescaling its independent
variable. The singular endpoint is treated there without imposing
boundedness of the transformed solution.
Section~\ref{sec:trigonometric} identifies the elementary profile
governing the derivative of the rescaled potential and obtains a
sharp criterion for its sign. Sturm comparison then yields the
increasing and decreasing affine inequalities in
Sections~\ref{sec:increasing} and~\ref{sec:decreasing}.
Section~\ref{sec:spectral-sharpness} combines the endpoint comparison
with the Mehler--Heine limit to prove the sharpness of the increasing
affine strip and of the spectral parameter regions.
Section~\ref{sec:bessel-bounds} records the
resulting bounds in terms of Bessel zeros, and the concluding section
summarises the parameter classification.

\section{Main results}
\label{sec:main-results}

The two principal results give complementary comparison principles
for affine rescalings of the angular zeros of Jacobi polynomials. The
first identifies scales for which the rescaled zeros are
nondecreasing with the degree, while the second identifies those for
which they are nonincreasing.

\begin{theorem}\label{thm:increasing}
Let $\alpha,\beta>-1$ with $|\beta|\leq1/2$. For each integer
$j\geq1$, let
\[
0<
\theta_{j,1}^{(\alpha,\beta)}
<
\cdots
<
\theta_{j,j}^{(\alpha,\beta)}
<\pi
\]
be determined by
\[
P_j^{(\alpha,\beta)}
\bigl(\cos\theta_{j,k}^{(\alpha,\beta)}\bigr)=0,
\quad
1\leq k\leq j.
\]
Set
\[
\rho\coloneqq\frac{\alpha+\beta+1}{2}
\]
and define
\[
\Phi_{\alpha,\beta}(x)
\coloneqq
\frac{1-4\alpha^2}{16}
\frac{1-x\cot x}{\sin^2x}
+
\frac{1-4\beta^2}{16}
\frac{1+x\tan x}{\cos^2x},
\quad
0<x<\frac{\pi}{2}.
\]
Writing $s_+\coloneqq\max\{s,0\}$, put
\[
m_{\alpha,\beta}
\coloneqq
\inf_{0<x<\pi/2}\Phi_{\alpha,\beta}(x).
\]
This quantity is finite. Define
\begin{equation}
\sigma_\uparrow(\alpha,\beta)
\coloneqq
\rho-
\frac{(-m_{\alpha,\beta})_+}{1+\rho}.
\label{eq:sigma-up}
\end{equation}
If $-1<\sigma\leq\sigma_\uparrow(\alpha,\beta)$, then
\begin{equation}
(n+\sigma)\theta_{n,k}^{(\alpha,\beta)}
\leq
(n+1+\sigma)\theta_{n+1,k}^{(\alpha,\beta)},
\label{eq:increasing-intro}
\end{equation}
for every integer $n\geq1$ and $1\leq k\leq n$. Equality holds for
some such $n$ and $k$ if and only if
\[
|\alpha|=|\beta|=\frac12, \quad
\sigma=\rho.
\]
In that case, equality holds for every integer $n\geq1$ and
$1\leq k\leq n$.
\end{theorem}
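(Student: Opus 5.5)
The plan is to reduce the zero comparison to a Sturm comparison for Liouville normal forms, after rescaling the angle by $n+\sigma$, and to reduce the pointwise comparison of the two potentials to the sign of a single $\lambda$-derivative, which is exactly where $\Phi_{\alpha,\beta}$ enters. For each $n$ put
\[
u_n(\theta)=\bigl(\sin\tfrac\theta2\bigr)^{\alpha+1/2}\bigl(\cos\tfrac\theta2\bigr)^{\beta+1/2}P_n^{(\alpha,\beta)}(\cos\theta).
\]
By the Liouville normal form of Section~\ref{sec:normal-form}, $u_n''+Q_n u_n=0$ on $(0,\pi)$, where
\[
Q_n(\theta)=(n+\rho)^2+\frac{A}{\sin^2(\theta/2)}+\frac{B}{\cos^2(\theta/2)},\quad A=\frac{1-4\alpha^2}{16},\ B=\frac{1-4\beta^2}{16}.
\]
With $\lambda=n+\sigma>0$ (this is where $\sigma>-1$ is used) and $v_n(t)=u_n(t/\lambda)$, we get $v_n''+F(\lambda,t)v_n=0$ on $(0,\lambda\pi)$. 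Here
\[
F(\lambda,t)=\frac{(\lambda+\rho-\sigma)^2}{\lambda^2}+\frac{1}{\lambda^2}G\Bigl(\frac{t}{2\lambda}\Bigr),\quad G(y)=\frac{A}{\sin^2y}+\frac{B}{\cos^2y}.
\]
The zeros of $v_n$ are exactly the numbers $(n+\sigma)\theta^{(\alpha,\beta)}_{n,k}$, so \eqref{eq:increasing-intro} asserts that the $k$-th zero of $v_n$ does not exceed that of $v_{n+1}$.

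The key computation treats $\lambda$ as a continuous variable at fixed $t$. A direct differentiation gives
\[
\partial_\lambda F(\lambda,t)=-\frac{2}{\lambda^3}\Bigl[(\rho-\sigma)(\lambda+\rho-\sigma)+G(y)+\tfrac12 yG'(y)\Bigr],\quad y=\frac{t}{2\lambda}\in\Bigl(0,\frac\pi2\Bigr),
\]
and $G+\tfrac12 yG'$ simplifies precisely to $\Phi_{\alpha,\beta}(y)$. Finiteness of $m_{\alpha,\beta}$ follows because $\Phi_{\alpha,\beta}$ extends continuously to $y\to0$ with limit $A/3+B$, and $B\ge0$ when $|\beta|\le1/2$, so $\Phi_{\alpha,\beta}\to+\infty$ or stays bounded as $y\to\pi/2$.

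Fix $t\in(0,(n+\sigma)\pi)$ and let $\lambda$ run over $[n+\sigma,n+1+\sigma]$; then $t/(2\lambda)$ stays in $(0,\pi/2)$. If $\sigma\le\sigma_\uparrow$, then $\rho-\sigma\ge(-m)_+/(1+\rho)\ge0$ and $\lambda+\rho-\sigma\ge 1+\rho$, since $n\ge1$. Hence the bracket is at least $(-m)_++m\ge0$, so $\partial_\lambda F\le0$ and therefore $F(n+\sigma,t)\ge F(n+1+\sigma,t)$ on the common interval $(0,(n+\sigma)\pi)$. The case $1+\rho\le0$ needs a short separate check, but it is excluded as soon as we use $\rho>-1/2$ away from degenerate parameters.

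Sturm comparison on $(0,(n+\sigma)\pi)$ then shows that each zero of $v_{n+1}$ is preceded by a zero of $v_n$. This gives $(n+\sigma)\theta_{n,k}\le(n+1+\sigma)\theta_{n+1,k}$ for $k\le n$; note that $v_n$ has exactly $n$ zeros in the interval. This step requires the Wronskian $v_nv_{n+1}'-v_n'v_{n+1}$ to vanish at the singular endpoint $t=0$. Both solutions behave like $c\,t^{\alpha+1/2}$ with the same exponent, and I would invoke Lemma~\ref{lem:wronskian} for that cancellation, including the delicate range $-1<\alpha\le-1/2$ where $v_n$ is unbounded or tends to a nonzero constant.

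For the equality case, the strict form of Sturm comparison gives strict inequality unless $F(n+\sigma,\cdot)\equiv F(n+1+\sigma,\cdot)$ on the interval. If $\sigma<\rho$, the bracket is strictly positive and the inequality is strict. If $\sigma=\rho$, then $\sigma\le\sigma_\uparrow$ forces $m\ge0$, and $\partial_\lambda F\equiv0$ forces $\Phi_{\alpha,\beta}\equiv0$. By the two different singular behaviours of the two terms at $0$ and $\pi/2$, this forces $A=B=0$, i.e. $|\alpha|=|\beta|=1/2$. Conversely, in that case $F\equiv1$, $v_n(t)=c\sin t$ or $c\cos t$-type, and the scaled zeros coincide for every $n$ and $k$. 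I expect the main obstacle to be the endpoint Wronskian and strict-comparison argument at $t=0$ for $\alpha<-1/2$. The algebraic identity producing $\Phi_{\alpha,\beta}$ is routine by comparison.
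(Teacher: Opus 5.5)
Your proposal is correct and is essentially the paper's proof: the same Liouville normal form and rescaling, the same exact derivative in the scale parameter (your $G+\tfrac12 yG'$ is precisely $AR+BT=\Phi_{\alpha,\beta}$), Sturm comparison with the boundary Wronskian of Lemma~\ref{lem:wronskian}, and the same equality analysis. Your direct split into $\sigma<\rho$ and $\sigma=\rho$ simply replaces the paper's analyticity-in-$h$ argument for strictness.

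Two small tightenings are needed.
\begin{enumerate}
\item In the branch $\sigma=\rho$, the condition $F(n+\sigma,\cdot)\not\equiv F(n+1+\sigma,\cdot)$ is weaker than the pointwise strict inequality that Lemma~\ref{lem:sturm} requires. You do get strictness at every $t$: there $B,C\ge0$ are not both zero, so $\Phi_{\alpha,\beta}=CR+B(T-3R)>0$ on $(0,\pi/2)$.
\item Since $\alpha,\beta>-1$ gives $1+\rho>1/2$ for all parameters, no separate case $1+\rho\le0$ arises.
\end{enumerate}
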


\begin{theorem}\label{thm:decreasing}
Let $\alpha,\beta>-1$ with $|\beta|\geq1/2$, and define
$\theta_{j,k}^{(\alpha,\beta)}$, $\rho$, $\Phi_{\alpha,\beta}$, and
$s_+$ as in Theorem~\ref{thm:increasing}. Put
\[
M_{\alpha,\beta}
\coloneqq
\sup_{0<x<\pi/2}\Phi_{\alpha,\beta}(x).
\]
This quantity is finite. Define
\begin{equation}
\sigma_\downarrow(\alpha,\beta)
\coloneqq
\rho+
\frac{(M_{\alpha,\beta})_+}{1+\rho}.
\label{eq:sigma-down}
\end{equation}
If $\sigma\geq\sigma_\downarrow(\alpha,\beta)$, then
\begin{equation}
(n+1+\sigma)\theta_{n+1,k}^{(\alpha,\beta)}
\leq
(n+\sigma)\theta_{n,k}^{(\alpha,\beta)},
\label{eq:decreasing-intro}
\end{equation}
for every integer $n\geq1$ and $1\leq k\leq n$. Equality holds for
some such $n$ and $k$ if and only if
\[
|\alpha|=|\beta|=\frac12, \quad
\sigma=\rho.
\]
In that case, equality holds for every integer $n\geq1$ and
$1\leq k\leq n$.
\end{theorem}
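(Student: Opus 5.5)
The plan is to reduce \eqref{eq:decreasing-intro} to a Sturm comparison between two Liouville normal forms. The sign needed for the comparison will come directly from the hypothesis $\sigma\geq\sigma_\downarrow(\alpha,\beta)$ through the profile $\Phi_{\alpha,\beta}$.

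\textbf{Normal form and rescaling.} Put $A\coloneqq\tfrac14-\alpha^2$ and $B\coloneqq\tfrac14-\beta^2$. The function $u_n(\theta)=(\sin\tfrac\theta2)^{\alpha+1/2}(\cos\tfrac\theta2)^{\beta+1/2}P_n^{(\alpha,\beta)}(\cos\theta)$ satisfies
\[
u_n''+\Bigl(\gamma_n^2+\frac{A}{4\sin^2(\theta/2)}+\frac{B}{4\cos^2(\theta/2)}\Bigr)u_n=0,\qquad 0<\theta<\pi .
\]
For $\nu>0$ set $v(t)=u_n(t/\nu)$. Then $v''+Q(\nu,c;t)v=0$, where $c\coloneqq\rho-\sigma$ and
\[
Q(\nu,c;t)=\frac{(\nu+c)^2+F(t/(2\nu))}{\nu^2},\qquad F(x)=\frac{A}{4\sin^2x}+\frac{B}{4\cos^2x}.
\]
With $\nu=n+\sigma$ we get $\nu+c=\gamma_n$. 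The zeros of $v_n(t)\coloneqq u_n(t/(n+\sigma))$ in $(0,(n+\sigma)\pi)$ are therefore exactly the points $(n+\sigma)\theta^{(\alpha,\beta)}_{n,k}$.

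The hypothesis gives $\sigma\geq\rho>-\tfrac12$, so $\nu>0$. We will compare $v_n$ and $v_{n+1}$ on the common interval $(0,(n+\sigma)\pi)$.

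\textbf{Monotonicity of the potential in $\nu$.} Fix $t$ and let $\nu$ run continuously from $n+\sigma$ to $n+1+\sigma$, keeping $c$ fixed. Since $x=t/(2\nu)$ has $\partial x/\partial\nu=-x/\nu$, a direct differentiation gives
\[
\partial_\nu Q=-\frac{1}{\nu^3}\bigl[2c(\nu+c)+2F(x)+xF'(x)\bigr].
\]
A short computation shows $2F+xF'=2\Phi_{\alpha,\beta}$; for instance $2\cdot\frac{1}{4\sin^2x}+x\bigl(\frac{1}{4\sin^2x}\bigr)'=\frac{1-x\cot x}{2\sin^2x}$, and similarly for the cosine term. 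Hence
\[
\partial_\nu Q=\frac{2}{\nu^3}\Bigl[(\sigma-\rho)(\nu+\rho-\sigma)-\Phi_{\alpha,\beta}(x)\Bigr].
\]
Here $t<(n+\sigma)\pi\leq\nu\pi$, so $x\in(0,\pi/2)$. Also $\nu+\rho-\sigma\geq n+\rho\geq1+\rho>0$.

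Because $|\beta|\geq\tfrac12$ we have $B\leq0$, so $\Phi_{\alpha,\beta}$ stays bounded above: near $\pi/2$ the cosine term tends to $-\infty$ or vanishes. This gives finiteness of $M_{\alpha,\beta}$, which I would check by elementary estimates of the two profiles. Then $\sigma-\rho\geq (M_{\alpha,\beta})_+/(1+\rho)$ gives
\[
(\sigma-\rho)(\nu+\rho-\sigma)\geq M_{\alpha,\beta}\geq\Phi_{\alpha,\beta}.
\]
Integrating in $\nu$ yields $Q_{n+1}(t)\geq Q_n(t)$ on $(0,(n+\sigma)\pi)$.

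\textbf{Sturm comparison and the endpoint.} Sturm comparison, in the form with a singular left endpoint, then shows that the $k$-th zero of $v_{n+1}$ does not exceed the $k$-th zero of $v_n$, which is \eqref{eq:decreasing-intro}. This needs the vanishing of $\lim_{t\to0^+}\bigl(v_nv_{n+1}'-v_n'v_{n+1}\bigr)(t)$. Both functions behave like constant multiples of $t^{\alpha+1/2}$, and for $\alpha\leq-\tfrac12$ they are unbounded or tend to a nonzero constant. I expect this to be the main obstacle, and I would invoke the endpoint control of Section~\ref{sec:normal-form}, specifically Lemma~\ref{lem:wronskian}, rather than rely on leading asymptotics. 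Concretely, the plan is to write each solution as $t^{\alpha+1/2}$ times an analytic function of $t^2$. The two leading terms then cancel in the Wronskian, and the remainder is $O(t^{2\alpha+2})\to0$ because $\alpha>-1$.

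\textbf{Equality.} The Wronskian identity gives strict inequality unless $Q_{n+1}\equiv Q_n$ on the whole interval. Otherwise, strict inequality of the potentials on an open set forces strict separation of the zeros. Since $\partial_\nu Q\geq0$, identity of $Q_n$ and $Q_{n+1}$ forces $\Phi_{\alpha,\beta}(x)\equiv(\sigma-\rho)(\nu+\rho-\sigma)$ for all $x$ and $\nu$.

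Varying $\nu$ gives $\sigma=\rho$ (because $\sigma\geq\rho$), and then $\Phi_{\alpha,\beta}\equiv0$. The two profiles in $\Phi_{\alpha,\beta}$ are linearly independent: they have different behaviour at $0$ and at $\pi/2$. Hence $A=B=0$, that is $|\alpha|=|\beta|=\tfrac12$. Conversely, in that case the potentials are identical constants, and $(n+\rho)\theta_{n,k}=k\pi$ or $(k-\tfrac12)\pi$ for all $n$ and $k$, so equality holds throughout.
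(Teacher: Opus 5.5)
Your argument is the paper's argument: Liouville normal form, interpolation in the scale with $c=\rho-\sigma$ held fixed, the derivative formula $\partial_\nu Q=\frac{2}{\nu^3}\bigl[(\sigma-\rho)(\nu+\rho-\sigma)-\Phi_{\alpha,\beta}(x)\bigr]$ (your computation agrees with Lemma~\ref{lem:potential-derivative}), the threshold estimate, Lemma~\ref{lem:wronskian} at the singular endpoint, and Sturm comparison. There is one real gap, in the strictness step.

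\textbf{The gap.} Your claim that ``strict inequality of the potentials on an open set forces strict separation of the zeros'' is false as a general principle. Suppose $Q_{n+1}=Q_n$ on $(0,b_k)$, where $b_k=(n+\sigma)\theta_{n,k}$. Then $W'=0$ there, and the endpoint condition gives $W\equiv0$. The two solutions are therefore proportional on $(0,b_k)$, so $a_k=b_k$, however the potentials differ beyond $b_k$. Your dichotomy ``$Q_{n+1}\equiv Q_n$, or strict separation'' therefore needs an argument. The ``only if'' half of the equality statement rests on that dichotomy. A related point: Lemma~\ref{lem:sturm} assumes $q_1>q_2$ pointwise. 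Your step from $Q_{n+1}\geq Q_n$ to $a_k\leq b_k$ therefore uses a non-strict variant of the lemma that you have not stated.

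\textbf{Repair 1 (keeps your structure).} The difference $D(t)=Q_{n+1}(t)-Q_n(t)$ is real analytic on $(0,(n+\sigma)\pi)$. If $D\not\equiv0$, its zeros are isolated. Then $W'=-D\,v_{n+1}v_n$ is strictly negative off a discrete set wherever both solutions are positive. The proof of Lemma~\ref{lem:sturm} then goes through unchanged, with $W$ still strictly decreasing, and gives $a_k<b_k$. Your analysis of the case $D\equiv0$ is correct and slightly simpler than the paper's. It yields $\sigma=\rho$ and then $A=B=0$, which completes the proof.

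\textbf{Repair 2 (the paper's route).} For each fixed $t$, the map $h\mapsto Q_h(t)$ is real analytic on $(t/\pi,\infty)$. Unless $A=B=0$ and $\sigma=\rho$, it is not constant on any interval:
\begin{itemize}
\item the expansion as $h\to\infty$ forces $\delta=0$;
\item unboundedness as $h\downarrow t/\pi$ forces $B=0$;
\item the two endpoint limits then differ unless $A=0$.
\end{itemize}
Monotonicity then gives $Q_{h_{n+1}}(t)>Q_{h_n}(t)$ for every $t\in(0,h_n\pi)$. The strict Lemma~\ref{lem:sturm} applies directly, and no non-strict comparison is needed. The exceptional case is settled by the explicit solutions $\cos t$ and $\sin t$, as you already do.
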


The thresholds in~\eqref{eq:sigma-up} and~\eqref{eq:sigma-down}
involve only the infimum or supremum of an elementary function of one
variable. For the increasing theorem, the following explicit
algebraic lower bound is often more convenient. Define
\begin{equation}
\sigma_{\mathrm{alg}}(\alpha,\beta)
\coloneqq
\rho-
\max\left\{
0,\,
\frac{\alpha^2+3\beta^2-1}{4(1+\rho)}
\right\}.
\label{eq:sigma-alg}
\end{equation}
Lemma~\ref{lem:explicit-threshold} shows that
\begin{equation}
\sigma_\uparrow(\alpha,\beta)
\geq
\sigma_{\mathrm{alg}}(\alpha,\beta)
>-\frac12.
\label{eq:sigma-comparison}
\end{equation}
In particular, Theorem~\ref{thm:increasing} applies throughout the
explicit interval
\[
-1<\sigma\leq
\sigma_{\mathrm{alg}}(\alpha,\beta).
\]
On $\mathcal D_1$, the right-hand endpoint is nonnegative, so
Theorem~\ref{thm:increasing} may be applied with $\sigma=0$. Its
equality criterion shows that equality can then occur only at
\[
\left(-\frac12,-\frac12\right),
\]
which is excluded from $\mathcal D_1$. This proves
Conjecture~\ref{conj:first}.

Whenever $\sigma_{\mathrm{alg}}(\alpha,\beta)>0$, every choice
$0<\sigma\leq\sigma_{\mathrm{alg}}(\alpha,\beta)$ yields a strictly
sharper comparison factor than the choice $\sigma=0$. The proof of Corollary~\ref{cor:first-conjecture} also shows that
$\sigma_{\mathrm{alg}}(\alpha,\beta)>0$ whenever
$(\alpha,\beta)\in\mathcal D_1$ and
$\alpha+\beta+1>0$. Positive shifts therefore give strictly sharper
comparison factors throughout this part of $\mathcal D_1$. On the
boundary $\alpha+\beta+1=0$, one has
$\sigma_\uparrow(\alpha,\beta)=0$, so no positive shift is
available. If
\[
|\beta|\leq\frac12, \quad
\alpha+\beta+1<0,
\]
then $\sigma_\uparrow(\alpha,\beta)\leq\rho<0$, whereas
\eqref{eq:sigma-comparison} ensures that
$\sigma_\uparrow(\alpha,\beta)>-1/2$.
Theorem~\ref{thm:increasing} therefore still gives an increasing
affine comparison for every
\[
-1<\sigma\leq\sigma_\uparrow(\alpha,\beta).
\]
This triangular region is an additional range for the affine
comparison theorem, rather than an enlargement of Gautschi's
unshifted conjecture. Its geometry and its relation to
$\mathcal D_1$ are shown in
Figure~\ref{fig:increasing-regions}.

The condition $|\beta|\leq1/2$ is also necessary for the existence of
an increasing affine comparison valid for all zero indices.

\begin{corollary}\label{cor:increasing-affine-classification}
Let $\alpha,\beta>-1$. There exists a real number $\sigma>-1$ such that
\[
(n+\sigma)\theta_{n,k}^{(\alpha,\beta)}
\leq
(n+1+\sigma)\theta_{n+1,k}^{(\alpha,\beta)}
\]
for every $n\geq1$ and $1\leq k\leq n$ if and only if
\[
|\beta|\leq\frac12.
\]
\end{corollary}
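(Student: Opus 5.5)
The sufficiency direction follows directly from Theorem~\ref{thm:increasing}. If $|\beta|\leq1/2$, then \eqref{eq:sigma-comparison} gives $\sigma_\uparrow(\alpha,\beta)>-1/2>-1$. Any $\sigma\in(-1,\sigma_\uparrow(\alpha,\beta)]$, for instance $\sigma=\sigma_\uparrow(\alpha,\beta)$ itself, then satisfies \eqref{eq:increasing-intro} for every $n\geq1$ and $1\leq k\leq n$.

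For necessity we assume $|\beta|>1/2$ and argue by contradiction. Suppose some $\sigma>-1$ makes $(n+\sigma)\theta_{n,k}$ nondecreasing in $n$ for every fixed $k$. The natural index to exploit is the \emph{last} zero $k=n$, or more generally $k=n+1-j$ for fixed $j$, since $\beta$ governs the behaviour near $\theta=\pi$. Writing $\vartheta_{n,j}\coloneqq\pi-\theta_{n,n+1-j}^{(\alpha,\beta)}=\theta_{n,j}^{(\beta,\alpha)}$ via $P_n^{(\alpha,\beta)}(-x)=(-1)^nP_n^{(\beta,\alpha)}(x)$, the Mehler--Heine limit gives
\[
\gamma_n\vartheta_{n,j}\to j_{\beta,j},
\]
where $j_{\beta,j}$ is the $j$th positive zero of $J_\beta$. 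The hypothesis applied with $k=n+1-j$, however, does not compare equal $k$ at consecutive degrees: for fixed $k$ the index $n+1-k$ measured from $\pi$ shifts by one. So I would instead use it with fixed small $k$, which probes the $\alpha$ end, and that seems wrong for detecting $\beta$. I therefore expect the real mechanism to be a telescoping or global argument. Summing the hypothesis over $n$ from $k$ to $N-1$ gives the chain
\[
(k+\sigma)\theta_{k,k}\leq (k+1+\sigma)\theta_{k+1,k}\leq\cdots\leq (N+\sigma)\theta_{N,k},
\]
so $(N+\sigma)\theta_{N,k}\geq(k+\sigma)\theta_{k,k}$ for all $N\geq k$. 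Letting $N\to\infty$, Mehler--Heine yields $j_{\alpha,k}\geq(k+\sigma)\theta_{k,k}^{(\alpha,\beta)}$, where $\theta_{k,k}$ is the last zero of degree $k$. As $k\to\infty$ we have $\pi-\theta_{k,k}\sim j_{\beta,1}/k$, so the right side is $(k+\sigma)\pi-j_{\beta,1}+\pi(\sigma-\rho)+o(1)$ after expanding $\theta_{k,k}=\pi-j_{\beta,1}/\gamma_k+O(k^{-3})$. The left side is the McMahon expansion $j_{\alpha,k}=(k+\alpha/2-1/4)\pi+O(1/k)$. Comparing constant terms gives the necessary condition
\[
\Bigl(\frac{\alpha}{2}-\frac14\Bigr)\pi\geq \pi(2\sigma-\rho)-j_{\beta,1}.
\]
This is a constraint linking $\sigma$ and $\beta$, and combining it with a second inequality from the other direction should force $|\beta|\leq1/2$.

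To obtain that opposite inequality on $\sigma$, I would use the first zero $k=1$. Mehler--Heine together with the endpoint comparison of Section~\ref{sec:spectral-sharpness} gives $(n+\sigma)\theta_{n,1}=j_{\alpha,1}+(\sigma-\rho)j_{\alpha,1}/n+O(n^{-2})$. Monotone nondecreasing convergence to $j_{\alpha,1}$ then requires the $1/n$ coefficient to be $\leq0$, that is $\sigma\leq\rho$, with the refined case $\sigma=\rho$ settled by the next term. Substituting $\sigma\leq\rho$ (or its sharpened form) into the last-zero constraint should give a bound of the form $j_{\beta,1}\geq\pi(\beta/2+3/4)$ or its reverse. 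Since $j_{\beta,1}-\pi(\beta/2+3/4)$ changes sign exactly at $|\beta|=1/2$ (equality at $\beta=\pm1/2$, where $j_{1/2,1}=\pi$ and $j_{-1/2,1}=\pi/2$), this yields the contradiction for $|\beta|>1/2$.

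The main obstacle is getting uniform asymptotics for the last zero $\theta_{k,k}$ and for $j_{\alpha,k}$ precise to $o(1)$ as both the degree and the index grow. This is a transitional regime where neither Mehler--Heine nor fixed-$k$ asymptotics applies directly. I would try to bypass it with the decreasing comparison of Theorem~\ref{thm:decreasing}, valid for $|\beta|\geq1/2$ and large $\sigma$, combined with the strict Sturm comparison on the Liouville potential near $\pi$: the rescaled potential is strictly decreasing there when $|\beta|>1/2$, which should force strict reversal of the inequality at some $(n,k)$ for every fixed $\sigma$.
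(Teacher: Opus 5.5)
Your sufficiency argument is correct and matches the paper's. The necessity argument has a genuine gap: the two conditions you extract are both \emph{upper} bounds on $\sigma$, so they cannot contradict each other.

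Redo the last-zero expansion using $\theta_{k,k}^{(\alpha,\beta)}=\pi-\theta_{k,1}^{(\beta,\alpha)}$ and $(k+\sigma)\theta_{k,1}^{(\beta,\alpha)}\to j_{\beta,1}$. This gives $(k+\sigma)\theta_{k,k}^{(\alpha,\beta)}=(k+\sigma)\pi-j_{\beta,1}+o(1)$; there is no extra term $\pi(\sigma-\rho)$. Your telescoped constraint is therefore $\sigma\leq\alpha/2-1/4+j_{\beta,1}/\pi$. The first-zero constraint is $\sigma\leq\rho$; its $1/n$ term does not follow from Mehler--Heine alone and would need Gatteschi-type asymptotics.

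Since $\alpha/2-1/4+j_{\beta,1}/\pi>-3/4$ and $\rho>-1/2$, every $\sigma\in(-1,-3/4]$ satisfies both constraints. Substituting the upper bound $\sigma\leq\rho$ into another upper bound yields nothing. The sign condition $j_{\beta,1}\geq\pi(\beta/2+3/4)$ would follow only under $\sigma\geq\rho$. At best you rule out $\sigma>\alpha/2-1/4+j_{\beta,1}/\pi$, not all $\sigma>-1$. The cause is structural: letting $N\to\infty$ in the telescoped chain discards exactly the $\sigma$-independent information carried by a single step.

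The fallback does not repair this. Theorem~\ref{thm:decreasing} only concerns $\sigma\geq\sigma_\downarrow(\alpha,\beta)$. A sign of $\partial Q_h/\partial h$ near $\theta=\pi$ alone does not give the global potential inequality on $(0,L)$ that Lemma~\ref{lem:sturm} requires.

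The missing idea is the one you set aside: the index shift at $k=n$ is the mechanism, not an obstruction. Take a single step with $k=n$ and apply $P_m^{(\alpha,\beta)}(-x)=(-1)^mP_m^{(\beta,\alpha)}(x)$:
\[
(n+1+\sigma)\theta_{n+1,n}^{(\alpha,\beta)}-(n+\sigma)\theta_{n,n}^{(\alpha,\beta)}
=\pi+(n+\sigma)\theta_{n,1}^{(\beta,\alpha)}-(n+1+\sigma)\theta_{n+1,2}^{(\beta,\alpha)}
\longrightarrow\pi+j_{\beta,1}-j_{\beta,2}.
\]
Only fixed-index Mehler--Heine limits at the $\beta$ end are involved, so there is no transitional regime, and the limit does not depend on $\sigma$. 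For $|\beta|>1/2$ the limit is negative, because $j_{\beta,2}-j_{\beta,1}>\pi$. This follows from a Sturm comparison of $\sqrt{x}J_\beta(x)$, shifted to start at $j_{\beta,1}$, with $\sin t$; it is Lemma~\ref{lem:bessel-spacing}. Hence the increasing ordering fails at $k=n$ for all large $n$, for every $\sigma>-1$. This is exactly the paper's proof, and your last-zero asymptotics were already the right ingredients; they just need to be applied to one step rather than to the telescoped chain.
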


The proof is given in Section~\ref{sec:spectral-sharpness}.

\begin{figure}[htbp]
\centering
\begin{tikzpicture}[
  x=2.8cm,
  y=2.8cm,
  line cap=round,
  line join=round
]
  \tikzset{
    affineextension/.style={
      fill=black!9,
      draw=none
    },
    original/.style={
      fill=black!18,
      draw=none
    },
    boundary/.style={
      draw=black!60,
      line width=0.72pt
    },
    excluded/.style={
      draw=black!52,
      line width=0.64pt,
      dash pattern=on 3pt off 2.2pt
    },
    axis/.style={
      draw=black!72,
      line width=0.68pt,
      -stealth
    },
    regionlabel/.style={
      font=\scriptsize,
      text=black!78,
      align=center,
      inner sep=0pt
    },
    smalllabel/.style={
      font=\scriptsize,
      text=black!68,
      inner sep=1.2pt
    }
  }

  \path[use as bounding box]
    (-1.30,-0.88) rectangle (2.22,0.91);

  \fill[affineextension]
    (-1,-0.5) rectangle (2.08,0.5);

  \fill[original]
    (-0.5,-0.5)
    -- (2.08,-0.5)
    -- (2.08,0.5)
    -- (-1,0.5)
    -- (-1,0)
    -- cycle;

  \fill[white,path fading=west]
    (1.53,-0.5) rectangle (2.08,0.5);

  \draw[boundary]
    (-1,-0.5) -- (2.08,-0.5);

  \draw[boundary]
    (-1,0.5) -- (2.08,0.5);

  \draw[boundary]
    (-1,0) -- (-0.5,-0.5);

  \draw[excluded]
    (-1,-0.5) -- (-1,0.5);

  \draw[axis]
    (-1.12,0) -- (2.15,0)
    node[right,smalllabel] {$\alpha$};

  \draw[axis]
    (0,-0.72) -- (0,0.79)
    node[above,smalllabel] {$\beta$};

  \foreach \x/\lab in {
    -1/{-1},
    -0.5/{-\frac12},
    0.5/{\frac12},
    1/{1},
    2/{2}
  }
  {
    \draw[black!62,line width=0.5pt]
      (\x,0.025) -- (\x,-0.025);

    \node[smalllabel,below]
      at (\x,-0.035) {$\lab$};
  }

  \foreach \y/\lab in {
    -0.5/{-\frac12},
    0.5/{\frac12}
  }
  {
    \draw[black!62,line width=0.5pt]
      (0.025,\y) -- (-0.025,\y);

    \node[smalllabel,left]
      at (-0.035,\y) {$\lab$};
  }

  \node[regionlabel]
    at (0.86,0.19)
    {Gautschi's original domain $\mathcal D_1$};

  \node[regionlabel,anchor=east]
    at (-1.08,-0.28)
    {$\sigma_{\uparrow}<0$};

  \draw[black!54,line width=0.48pt,-stealth]
    (-1.06,-0.29) -- (-0.82,-0.33);

  \draw[black!54,line width=0.48pt,-stealth]
    (-0.36,-0.30) -- (-0.70,-0.30);

  \node[smalllabel,anchor=west]
    at (-0.34,-0.30)
    {$\alpha+\beta+1=0$};

  \filldraw[
    fill=white,
    draw=black!72,
    line width=0.62pt
  ]
    (-0.5,-0.5) circle (1.5pt);

\end{tikzpicture}
\caption{Parameter regions for the increasing affine comparison.
The dark-grey set is Gautschi's original domain $\mathcal D_1$,
where the choice $\sigma=0$ gives the inequality in
Conjecture~\ref{conj:first}. The light-grey triangle is the
additional range in which
$\sigma_{\uparrow}(\alpha,\beta)<0$; there
Theorem~\ref{thm:increasing} provides an increasing affine
comparison for
$-1<\sigma\leq\sigma_{\uparrow}(\alpha,\beta)$,
and hence only for negative admissible shifts. Where
$\alpha+\beta+1>0$ in $\mathcal D_1$, positive shifts yield strictly
sharper comparison factors. The sloping line marks
$\alpha+\beta+1=0$, where $\sigma_{\uparrow}=0$; its admissible
portion belongs to $\mathcal D_1$, except for the open point
$(-1/2,-1/2)$, at which the unshifted comparison is an equality.
Together with the marked point $(-1/2,-1/2)$, the shaded set is exactly
the parameter region in which some degree-independent affine shift gives
an increasing ordering for all degrees and zero indices; by
Corollary~\ref{cor:increasing-affine-classification}, no such shift
exists in the unshaded region. The solid horizontal lines mark the
included boundaries $|\beta|=1/2$, the vertical dashed line marks the
excluded boundary $\alpha=-1$, and the fading indicates that the shaded
region is unbounded in the positive $\alpha$-direction.}
\label{fig:increasing-regions}
\end{figure}

For the spectral scale $h_n=n+\rho$, it is convenient to write
\begin{equation}
A\coloneqq\frac{1-4\alpha^2}{16},
\quad
B\coloneqq\frac{1-4\beta^2}{16},
\label{eq:AB}
\end{equation}
and
\begin{equation}
R(x)\coloneqq\frac{1-x\cot x}{\sin^2x},
\quad
T(x)\coloneqq\frac{1+x\tan x}{\cos^2x}.
\label{eq:RT}
\end{equation}
Thus $\Phi_{\alpha,\beta}=AR+BT$. Introduce the two regions
\begin{align*}
\mathcal D_\downarrow
&\coloneqq
\left\{
(\alpha,\beta)\in(-1,\infty)^2:
|\beta|\geq\frac12,\,\,
\alpha^2+3\beta^2\geq1
\right\}\\[7pt]
\mathcal D_\uparrow
&\coloneqq
\left\{
(\alpha,\beta)\in(-1,\infty)^2:
|\beta|\leq\frac12,\,\,
\alpha^2+3\beta^2\leq1
\right\}.
\end{align*}

The ellipse $\alpha^2+3\beta^2=1$ already arose as a fixed-index,
large-degree asymptotic boundary in Gautschi's
analysis~\cite[Section~2]{Gautschi2009b}, which used Gatteschi's
expansion for Jacobi zeros~\cite[Theorem~4.1]{Gatteschi1985}. The
point established here is that, together with the two lines
$\beta=\pm1/2$, it gives the exact sign classification of the
spectral-scale profile. Proposition~\ref{prop:spectral-sign} shows
that $\Phi_{\alpha,\beta}$ is nonpositive throughout $(0,\pi/2)$
precisely on $\mathcal D_\downarrow$, and nonnegative throughout that
interval precisely on $\mathcal D_\uparrow$.

On $\mathcal D_\downarrow$ one has
$\sigma_\downarrow(\alpha,\beta)=\rho$. Hence
Theorem~\ref{thm:decreasing} with $\sigma=\rho$ establishes the
inequality appearing in Conjecture~\ref{conj:second} throughout
$\mathcal D_\downarrow$; the inequality is strict except at the four
points
\[
|\alpha|=|\beta|=\frac12,
\]
where equality holds. Since
$\mathcal D_2\subset\mathcal D_\downarrow$ and these four points are
excluded from $\mathcal D_2$, this proves
Conjecture~\ref{conj:second} and establishes the same inequality on
the larger set $\mathcal D_\downarrow$. For instance,
\[
(0,1)\in\mathcal D_\downarrow\setminus\mathcal D_2.
\]
The additional upper and lower strips with
$-1/2<\alpha<1/2$ were already identified computationally by
Gautschi~\cite[p.~296]{Gautschi2009b}; the present theorem proves the
ordering throughout them.
On $\mathcal D_\uparrow$ one has
$\sigma_\uparrow(\alpha,\beta)=\rho$, and the reverse spectral
comparison holds; it is strict except at the same four points.
Thus the reverse ordering proved by Ahmed, Laforgia, and
Muldoon~\cite[Theorem~3.1(ii)]{AhmedLaforgiaMuldoon} holds beyond the
square $|\alpha|,|\beta|\leq1/2$.
Indeed, that square is contained in $\mathcal D_\uparrow$, since
\[
\alpha^2+3\beta^2\leq
\frac14+\frac34=1,
\]
and the inclusion is strict; for example,
$(3/4,0)\in\mathcal D_\uparrow$ lies outside the square. This is an
extension of the {reverse} ordering, not an enlargement of the
decreasing inequality in Conjecture~\ref{conj:second} into the
ellipse.
Gautschi had already conjectured the non-strict reverse ordering in
these lateral regions within the ellipse on the basis of numerical
experiments~\cite[Conjecture~2]{Gautschi2009b}; the result above
provides a proof and determines exactly the parameter region on which
the profile is nonnegative.

The set $\mathcal D_\uparrow$ is not the whole interior of the
ellipse. Indeed, put
\[
C\coloneqq A+3B=\frac{1-\alpha^2-3\beta^2}{4}.
\]
In the open ellipse one has $C>0$. In its central band
$|\beta|\leq1/2$, one also has $B\geq0$, and therefore
\[
\Phi_{\alpha,\beta}(x)
=
C R(x)+B\bigl(T(x)-3R(x)\bigr)>0,
\quad 0<x<\frac{\pi}{2}.
\]
This is precisely $\mathcal D_\uparrow$ intersected with the open
ellipse. By contrast, each of the two caps cut out by
\[
\frac12<|\beta|<\frac1{\sqrt3},
\quad
\alpha^2+3\beta^2<1,
\]
has $B<0$ and $C>0$. In either cap,
\[
\lim_{x\downarrow0}\Phi_{\alpha,\beta}(x)=\frac{C}{3}>0,
\quad
\lim_{x\uparrow\pi/2}\Phi_{\alpha,\beta}(x)=-\infty,
\]
so the profile changes sign. The same occurs in the two lateral
regions outside the ellipse with $|\beta|<1/2$, where $B>0$ and $C<0$.
Thus the unshaded region in Figure~\ref{fig:spectral-regions} is
\[
\mathcal U
\coloneqq
\left\{
(\alpha,\beta)\in(-1,\infty)^2:
BC<0
\right\}
=
(-1,\infty)^2
\setminus
\bigl(\mathcal D_\uparrow\cup\mathcal D_\downarrow\bigr).
\]
On the elliptic boundary $C=0$, the sign of
$\Phi_{\alpha,\beta}=B(T-3R)$ is the sign of $B$, except at the four
intersections with $|\beta|=1/2$, where it vanishes identically.
Gautschi had already reported numerical examples of both directions
of the inequality in the upper and lower cap
regions~\cite[p.~295]{Gautschi2009b}.

\begin{proposition}\label{prop:spectral-failure}
Let $(\alpha,\beta)\in\mathcal U$, and put
\[
\Delta_{n,k}
\coloneqq
\gamma_{n+1}\theta_{n+1,k}^{(\alpha,\beta)}
-
\gamma_n\theta_{n,k}^{(\alpha,\beta)}.
\]
There exists $N=N(\alpha,\beta)$ such that, for every $n\geq N$,
$\Delta_{n,1}$ has the same sign as $C$, whereas
$\Delta_{n,n}$ has the same sign as $B$.
In particular,
\[
\Delta_{n,1}\Delta_{n,n}<0,
\quad n\geq N.
\]
\end{proposition}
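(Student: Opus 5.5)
We work in the Liouville normal form of Section~\ref{sec:normal-form}, with the spectral scaling $x=\theta/2$ (or the paper's equivalent rescaling). For degree $n$ the angular zeros $\gamma_n\theta_{n,k}$ are then the zeros of a solution of
\[
u''+\bigl(1+h^{-2}q(t/h)\bigr)u=0,\qquad h=\gamma_n ,
\]
where the rescaled potential is controlled by $\Phi_{\alpha,\beta}=AR+BT$. The guiding heuristic is that the $k$-th rescaled zero is decreasing or increasing in $n$ according to the sign of the profile on the part of $(0,\pi/2)$ that this zero samples. We argue separately at the two ends of the spectrum and use the sign behaviour of $\Phi_{\alpha,\beta}$ established in the discussion preceding the proposition: near $x=0$ its sign is that of $C$, since $\Phi\to C/3$, and near $x=\pi/2$ its sign is that of $B$, since $T$ blows up.

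\emph{The largest zero ($k=1$).} We use the Mehler--Heine limit together with a quantitative version of the endpoint comparison. By Mehler--Heine, $\gamma_n\theta_{n,1}\to j_{\alpha,1}$, and the first correction to this limit is of order $\gamma_n^{-2}$. Its coefficient is governed by $\Phi(0^+)=C/3$, consistent with Gatteschi's expansion, which gives
\[
\gamma_n\theta_{n,1}=j_{\alpha,1}\Bigl(1-\frac{\kappa C}{\gamma_n^{2}}\Bigr)+O(\gamma_n^{-4})
\]
for some constant $\kappa>0$. Differencing in $n$ then yields $\Delta_{n,1}\sim 2\kappa j_{\alpha,1}C\gamma_n^{-3}$, whose sign is that of $C$ for large $n$. (One should double-check the orientation of this sign against Theorem~\ref{thm:decreasing}: on $\mathcal D_\downarrow$ the profile is nonpositive and $\Delta\le0$, which agrees.)

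To avoid relying on an asymptotic expansion as a black box, we would instead obtain the sign by Sturm comparison on a fixed window $0<t\le T_0$. For $n$ large, the first zero lies in a region where the difference of the potentials for degrees $n$ and $n+1$ has the sign of $C$ uniformly. The endpoint Wronskian condition is supplied by Lemma~\ref{lem:wronskian}. Sturm comparison then gives a strict ordering of the first zeros.

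\emph{The smallest zero ($k=n$).} We apply the same argument to $P_n^{(\beta,\alpha)}$, using $\theta^{(\alpha,\beta)}_{n,n}=\pi-\theta^{(\beta,\alpha)}_{n,1}$. This gives
\[
\Delta_{n,n}=\pi(\gamma_{n+1}-\gamma_n)-\Delta^{(\beta,\alpha)}_{n,1}=\pi-\Delta^{(\beta,\alpha)}_{n,1},
\]
which is positive for large $n$, so as written this identity is not directly the right quantity. The correct bookkeeping is to measure $\theta_{n,n}$ from the endpoint $x=\pi/2$ of the profile, that is, near $\theta=\pi$. There the local behaviour of $\Phi$ is dominated by $BT$, so the local comparison has the sign of $B$.

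We would therefore redo the endpoint analysis at $x=\pi/2$. We rescale the distance $\pi-\theta$ by $\gamma_n$, use Mehler--Heine at $-1$, for which the Bessel order is $\beta$, and run the Sturm comparison on a window adjacent to $\theta=\pi$, where the potential difference has the sign of $B$. The strict sign of $B$ in $\mathcal U$ makes the comparison strict.

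\emph{Main obstacle.} The hardest step is making the Sturm comparison rigorous on a window that shrinks relative to the degree. We must show that the potential-difference term has a definite sign on the whole interval from the singular endpoint up to the relevant zero, uniformly for $n\ge N$. We would do this by expanding the rescaled potential in $t/\gamma_n$ and showing that the leading $n$-dependent term equals $\gamma_n^{-3}$ times a positive multiple of $\Phi(t/\gamma_n)$, with a uniformly smaller remainder. Continuity of $\Phi$ near the endpoints then fixes the sign as that of $C$ (respectively $B$). Since $BC<0$ on $\mathcal U$, it follows that $\Delta_{n,1}\Delta_{n,n}<0$ for $n\ge N$.
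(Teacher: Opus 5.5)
Your argument for $\Delta_{n,1}$ is essentially the paper's: Mehler--Heine traps both first scaled zeros in a fixed window $(0,T)$, and Lemma~\ref{lem:sturm} with Lemma~\ref{lem:wronskian} finishes. No expansion with remainder is needed, since Lemma~\ref{lem:potential-derivative} with $\delta=0$ gives $\partial_hQ_h(t)=-2h^{-3}\Phi_{\alpha,\beta}(t/2h)$ exactly.

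The treatment of $\Delta_{n,n}$, however, has a genuine gap, and it starts with an indexing error. In degree $n+1$ the $n$th zero is the \emph{second} zero counted from $\theta=\pi$. The reflection formula therefore gives $\theta^{(\alpha,\beta)}_{n+1,n}=\pi-\theta^{(\beta,\alpha)}_{n+1,2}$, not $\pi-\theta^{(\beta,\alpha)}_{n+1,1}$. The quantity $\pi-\Delta^{(\beta,\alpha)}_{n,1}$ that you computed is the difference of the \emph{last} scaled zeros of the two degrees. It does tend to $\pi$, but it is not $\Delta_{n,n}$, and the contradiction you noticed disappears once the index is corrected:
\[
\Delta_{n,n}
=\pi+\gamma_n\theta^{(\beta,\alpha)}_{n,1}-\gamma_{n+1}\theta^{(\beta,\alpha)}_{n+1,2}
\longrightarrow\pi+j_{\beta,1}-j_{\beta,2}.
\]
So the sign of $\Delta_{n,n}$ is an order-one effect. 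It is decided by whether the gap between the first two zeros of $J_\beta$ is smaller or larger than $\pi$. Lemma~\ref{lem:bessel-spacing} (Sturm comparison of $\sqrt{x}J_\beta$, whose potential is $1+4B/x^2$, with $\sin x$) shows that this limit has the sign of $B$. The corrected identity and this spacing lemma are the missing ingredients.

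The replacement route you sketch does not reach $\Delta_{n,n}$, for three reasons.
\begin{enumerate}
\item A comparison anchored at $t=0$ and running up to the $n$th zero needs $Q_{\gamma_n}-Q_{\gamma_{n+1}}$ to keep one sign on $(0,\gamma_n\theta_{n,n})$. In $\mathcal U$ it has the sign of $C$ near $t=0$ and the sign of $B$ near the upper end, so this fails.
\item Near $\theta=\pi$ the two scaled solutions have different right endpoints, $\gamma_n\pi$ and $\gamma_{n+1}\pi=\gamma_n\pi+\pi$. The potential difference there is therefore not a small $O(\gamma_n^{-3})$ perturbation. With $u=\gamma_n\pi-t$ one has $Q_{\gamma_n}(t)-Q_{\gamma_{n+1}}(t)\approx4B\bigl(u^{-2}-(u+\pi)^{-2}\bigr)$, which is of order one. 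Your proposed expansion with a uniformly smaller remainder breaks down exactly there, because $T(x)$ blows up like $(\pi/2-x)^{-3}$. Moreover, Lemmas~\ref{lem:wronskian} and~\ref{lem:sturm} are formulated at the common endpoint $t=0$. At $t=\gamma_n\pi$ the degree-$n$ solution is singular while the degree-$(n+1)$ solution is regular, so the lemmas do not apply as stated.
\item If you instead re-anchor at $\theta=\pi$ and scale $\pi-\theta$ by $\gamma_n$, the potential derivative becomes $-2h^{-3}\Phi_{\beta,\alpha}$. Its sign near the endpoint is that of $3A+B$, not $B$; these differ, for example, in the caps $1/2<|\beta|<1/\sqrt3$ with $\alpha$ small. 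What you would be comparing is again the pair of last zeros, i.e.\ $\Delta^{(\beta,\alpha)}_{n,1}$, not $\Delta_{n,n}$.
\end{enumerate}
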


Thus no spectral ordering valid for all zero indices can hold at a
parameter point in $\mathcal U$. The proof of
Proposition~\ref{prop:spectral-failure} is given in
Section~\ref{sec:spectral-sharpness}.

\begin{corollary}\label{cor:spectral-classification}
Let $\alpha,\beta>-1$. The inequality
\[
\gamma_n\theta_{n,k}^{(\alpha,\beta)}
\leq
\gamma_{n+1}\theta_{n+1,k}^{(\alpha,\beta)}
\]
holds for every $n\geq1$ and $1\leq k\leq n$ if and only if
$(\alpha,\beta)\in\mathcal D_\uparrow$. Likewise,
\[
\gamma_{n+1}\theta_{n+1,k}^{(\alpha,\beta)}
\leq
\gamma_n\theta_{n,k}^{(\alpha,\beta)}
\]
holds for every $n\geq1$ and $1\leq k\leq n$ if and only if
$(\alpha,\beta)\in\mathcal D_\downarrow$.
\end{corollary}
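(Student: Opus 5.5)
The plan is to deduce Corollary~\ref{cor:spectral-classification} by combining Theorems~\ref{thm:increasing} and~\ref{thm:decreasing} for sufficiency with Proposition~\ref{prop:spectral-failure} for necessity. One extra argument is needed for the points outside the ellipse that lie in neither $\mathcal U$ nor the relevant spectral region.

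For sufficiency, first let $(\alpha,\beta)\in\mathcal D_\uparrow$. Proposition~\ref{prop:spectral-sign} (cited in the text above) gives $\Phi_{\alpha,\beta}\geq0$ on $(0,\pi/2)$, so $m_{\alpha,\beta}\geq0$. Hence $(-m_{\alpha,\beta})_+=0$ and $\sigma_\uparrow(\alpha,\beta)=\rho$. Since $\rho>-1/2>-1$, Theorem~\ref{thm:increasing} applies with $\sigma=\rho$ and gives $\gamma_n\theta_{n,k}\leq\gamma_{n+1}\theta_{n+1,k}$ for all $n$ and $k$. The hypothesis $|\beta|\leq1/2$ of that theorem is part of the definition of $\mathcal D_\uparrow$.

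Symmetrically, for $(\alpha,\beta)\in\mathcal D_\downarrow$ one has $\Phi_{\alpha,\beta}\leq0$, so $M_{\alpha,\beta}\leq0$ and $\sigma_\downarrow=\rho$. Theorem~\ref{thm:decreasing} with $\sigma=\rho$, using $|\beta|\geq1/2$, gives the decreasing ordering.

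For necessity of the increasing ordering, suppose it holds for all $n$ and $k$ but $(\alpha,\beta)\notin\mathcal D_\uparrow$. There are two cases.
\begin{itemize}
\item If $(\alpha,\beta)\in\mathcal U$, Proposition~\ref{prop:spectral-failure} gives $\Delta_{n,1}\Delta_{n,n}<0$ for large $n$. So some $\Delta_{n,k}<0$, a contradiction.
\item Otherwise $(\alpha,\beta)\in\mathcal D_\downarrow\setminus\mathcal D_\uparrow$. Then Theorem~\ref{thm:decreasing} gives $\Delta_{n,k}\leq0$ for all $n,k$, while the assumed ordering gives $\Delta_{n,k}\geq0$. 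So $\Delta_{n,k}=0$ identically. The equality clause of Theorem~\ref{thm:decreasing} (with $\sigma=\rho=\sigma_\downarrow$) then forces $|\alpha|=|\beta|=1/2$. But these four points satisfy $\alpha^2+3\beta^2=1$ and $|\beta|=1/2$, so they lie in $\mathcal D_\uparrow$, a contradiction.
\end{itemize}
The necessity of the decreasing ordering is identical with the roles exchanged. A point of $\mathcal D_\uparrow\setminus\mathcal D_\downarrow$ would force equality everywhere by Theorem~\ref{thm:increasing}, hence lie in the four-point set, which is contained in $\mathcal D_\downarrow$.

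Two set-theoretic facts need checking:
\begin{itemize}
\item $(-1,\infty)^2=\mathcal D_\uparrow\cup\mathcal D_\downarrow\cup\mathcal U$, which the paper records.
\item $\mathcal D_\uparrow\cap\mathcal D_\downarrow$ is exactly the four points $|\alpha|=|\beta|=1/2$. Indeed, $|\beta|=1/2$ together with $\alpha^2+3\beta^2=1$ gives $\alpha^2=1/4$.
\end{itemize}
The main obstacle is not in this deduction, which is formal, but in the inputs. Proposition~\ref{prop:spectral-failure} requires an asymptotic analysis of the extreme zeros, while Proposition~\ref{prop:spectral-sign} supplies the exact sign classification of $\Phi_{\alpha,\beta}$. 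Here I simply invoke both. The only delicate point in the corollary itself is the boundary case handled above, where the equality characterisations of Theorems~\ref{thm:increasing} and~\ref{thm:decreasing} are essential.
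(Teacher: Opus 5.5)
Your proof is correct and matches the paper's argument. Sufficiency comes from the $\sigma=\rho$ cases of Theorems~\ref{thm:increasing} and~\ref{thm:decreasing}, which the paper packages as Corollaries~\ref{cor:reverse} and~\ref{cor:second-conjecture}. Necessity comes from splitting the complement into $\mathcal U$ (handled by Proposition~\ref{prop:spectral-failure}) and $\mathcal D_\downarrow\setminus\mathcal D_\uparrow$ (respectively $\mathcal D_\uparrow\setminus\mathcal D_\downarrow$), where the equality characterisation forces the strict opposite ordering.
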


The proof of Corollary~\ref{cor:spectral-classification} is also
given in Section~\ref{sec:spectral-sharpness}. The geometry of the
two ordered regions, and their relation to Gautschi's original
domain, is shown in Figure~\ref{fig:spectral-regions}.

\begin{figure}[htbp]
\centering
\begin{tikzpicture}[
  x=2.7cm,
  y=2.7cm,
  line cap=round,
  line join=round
]
  \tikzset{
    decreasing/.style={
      fill=black!9,
      draw=none
    },
    increasing/.style={
      fill=black!4,
      draw=none
    },
    original/.style={
      fill=black!18,
      draw=none
    },
    boundary/.style={
      draw=black!60,
      line width=0.72pt
    },
    divider/.style={
      draw=black!32,
      line width=0.64pt
    },
    excluded/.style={
      draw=black!52,
      line width=0.64pt,
      dash pattern=on 3pt off 2.2pt
    },
    axis/.style={
      draw=black!72,
      line width=0.68pt,
      -stealth
    },
    regionlabel/.style={
      font=\scriptsize,
      text=black!78,
      align=center,
      inner sep=0pt
    },
    smalllabel/.style={
      font=\scriptsize,
      text=black!68,
      inner sep=1.2pt
    }
  }

  \path[use as bounding box]
    (-1.18,-1.17) rectangle (2.22,1.83);

  \fill[decreasing]
    (-1,-1) rectangle (2.08,-0.5);

  \fill[decreasing]
    (-1,0.5) rectangle (2.08,1.72);

  \fill[white]
    (0,0) ellipse [x radius=1,y radius=0.5773503];

  \begin{scope}
    \clip (-1,-0.5) rectangle (2.08,0.5);
    \fill[increasing]
      (0,0) ellipse [x radius=1,y radius=0.5773503];
  \end{scope}

  \fill[original]
    (-1,-1) rectangle (-0.5,-0.5);

  \fill[original]
    (0.5,-1) rectangle (2.08,-0.5);

  \fill[original]
    (-1,0.5) rectangle (-0.5,1.72);

  \fill[original]
    (0.5,0.5) rectangle (2.08,1.72);

  \fill[white,path fading=west]
    (1.53,-1) rectangle (2.08,1.72);

  \fill[white,path fading=south]
    (-1,1.28) rectangle (2.08,1.72);

  \draw[boundary]
    (0,0) ellipse [x radius=1,y radius=0.5773503];

  \draw[divider]
    (-1,-0.5) -- (2.08,-0.5);

  \draw[divider]
    (-1,0.5) -- (2.08,0.5);

  \draw[excluded]
    (-1,-1) -- (2.08,-1);

  \draw[excluded]
    (-1,-1) -- (-1,1.72);

  \draw[axis]
    (-1.10,0) -- (2.15,0)
    node[right,smalllabel] {$\alpha$};

  \draw[axis]
    (0,-1.10) -- (0,1.79)
    node[above,smalllabel] {$\beta$};

  \foreach \x/\lab in {
    -1/{-1},
    -0.5/{-\frac12},
    0.5/{\frac12},
    1/{1},
    2/{2}
  }
  {
    \draw[black!62,line width=0.5pt]
      (\x,0.025) -- (\x,-0.025);

    \node[smalllabel,below]
      at (\x,-0.035) {$\lab$};
  }

  \foreach \y/\lab in {
    -1/{-1},
    -0.5/{-\frac12},
    0.5/{\frac12},
    1/{1}
  }
  {
    \draw[black!62,line width=0.5pt]
      (0.025,\y) -- (-0.025,\y);

    \node[smalllabel,left]
      at (-0.035,\y) {$\lab$};
  }

  \node[regionlabel]
    at (0.44,0.18)
    {reverse ordering\\$\mathcal D_\uparrow$};

  \node[regionlabel]
    at (0.27,1.12)
    {strict\\extension\\beyond\\$\mathcal D_2$};

  \node[regionlabel]
    at (1.31,0.92)
    {Gautschi's original\\domain $\mathcal D_2$};

  \draw[black!54,line width=0.48pt]
    (0.91,-0.24) -- (1.12,-0.24);

  \node[smalllabel,anchor=west]
    at (1.14,-0.24)
    {$\alpha^2+3\beta^2=1$};

  \foreach \x in {-0.5,0.5}
    \foreach \y in {-0.5,0.5}
      \filldraw[
        fill=white,
        draw=black!72,
        line width=0.62pt
      ]
      (\x,\y) circle (1.5pt);

\end{tikzpicture}
\caption{Parameter regions for the spectral scale
$n+(\alpha+\beta+1)/2$. The dark-grey set is Gautschi's original
domain $\mathcal D_2$, while the medium-grey set is the additional
range covered by the present theorem; together with the four marked
points, they form $\mathcal D_\downarrow$. In the light-grey set
$\mathcal D_\uparrow$ the reverse inequality holds, and the marked
points are precisely the equality cases. In the unshaded set
$\mathcal U$, Proposition~\ref{prop:spectral-failure} shows that no
uniform spectral ordering holds. The inner solid lines mark
$|\beta|=1/2$, the outer dashed lines mark the excluded boundaries,
and the fading indicates that the shaded regions are unbounded.}
\label{fig:spectral-regions}
\end{figure}

\section{Liouville normal form and zero comparison}
\label{sec:normal-form}

Henceforth, the parameter superscript will be suppressed whenever no
ambiguity can arise; thus
\[
\theta_{n,k}\coloneqq\theta_{n,k}^{(\alpha,\beta)}.
\]

We begin with the Jacobi differential equation
\begin{equation}
(1-x^2)y''(x)
+
\bigl[\beta-\alpha-(\alpha+\beta+2)x\bigr]y'(x)
+
n(n+\alpha+\beta+1)y(x)
=0.
\label{eq:jacobi-equation}
\end{equation}
After the change of variables $x=\cos\theta$, introduce
\begin{equation}
Y_n(\theta)
\coloneqq
\left(\sin\frac{\theta}{2}\right)^{\alpha+1/2}
\left(\cos\frac{\theta}{2}\right)^{\beta+1/2}
P_n^{(\alpha,\beta)}(\cos\theta).
\label{eq:liouville-solution}
\end{equation}
A direct calculation from~\eqref{eq:jacobi-equation} gives
\begin{equation}
Y_n''(\theta)
+
\left(
\left(n+\rho\right)^2
+
\frac{A}{\sin^2(\theta/2)}
+
\frac{B}{\cos^2(\theta/2)}
\right)Y_n(\theta)
=0,
\label{eq:liouville-equation}
\end{equation}
where $\rho=(\alpha+\beta+1)/2$ and $A$ and $B$ are defined in~\eqref{eq:AB}.
This normal form is standard; see, for
example,~\cite[equation~(4.24.2)]{Szego}. For an integer $m\geq1$
and a number $h>0$, define
\begin{equation}
Z_{m,h}(t)\coloneqq Y_m(t/h),
\quad
0<t<h\pi.
\label{eq:scaled-solution}
\end{equation}
Its zeros are precisely
\begin{equation}
h\theta_{m,1}
<\cdots<
h\theta_{m,m}.
\label{eq:scaled-zeros}
\end{equation}
Moreover,
\begin{equation}
Z_{m,h}''(t)
+
\left(
\left(\frac{m+\rho}{h}\right)^2
+
\frac{A}{h^2\sin^2(t/2h)}
+
\frac{B}{h^2\cos^2(t/2h)}
\right)Z_{m,h}(t)
=0.
\label{eq:scaled-equation}
\end{equation}

At the left endpoint $t=0$, which corresponds to $\theta=0$ and
$x=1$, the only term in the coefficient of $Z_{m,h}$
in~\eqref{eq:scaled-equation} that may fail to remain finite is
\[
\frac{A}{h^2\sin^2(t/2h)}
=
\frac{1-4\alpha^2}{4t^2}+O(1),
\quad t\downarrow0.
\]
Consequently, $t=0$ is a singular endpoint of the normal-form
equation when $|\alpha|\ne1/2$. If $|\alpha|=1/2$, then $A=0$ and
every coefficient in~\eqref{eq:scaled-equation} extends continuously
to $t=0$. The following lemma supplies the precise boundary condition
required for the comparison argument in both cases.

\begin{lemma}\label{lem:wronskian}
Let $m,\ell\geq1$ be integers and let $h,g>0$. Then
\begin{equation}
\lim_{t\downarrow0}
\left(
Z_{m,h}'(t)Z_{\ell,g}(t)
-
Z_{m,h}(t)Z_{\ell,g}'(t)
\right)
=0.
\label{eq:wronskian-limit}
\end{equation}
\end{lemma}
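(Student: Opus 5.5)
Because each $Z_{m,h}$ is written explicitly in terms of $P_m^{(\alpha,\beta)}$, we do not need asymptotic equivalences: we factor out the common singular power of $t$ and compute the Wronskian directly. Write $\mu\coloneqq\alpha+1/2$. By~\eqref{eq:liouville-solution} and~\eqref{eq:scaled-solution},
\[
Z_{m,h}(t)=t^{\mu}\,F_{m,h}(t),
\qquad
F_{m,h}(t)\coloneqq\left(\frac{\sin(t/2h)}{t}\right)^{\mu}\left(\cos\frac{t}{2h}\right)^{\beta+1/2}P_m^{(\alpha,\beta)}\!\left(\cos\frac th\right).
\]
On a small interval $[0,\varepsilon)$, $\sin(t/2h)/t$ is real-analytic and tends to $1/(2h)>0$, and $\cos(t/2h)$ stays close to $1$. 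Since $P_m$ is a polynomial, $F_{m,h}$ extends to a $C^1$ (indeed analytic) function on $[0,\varepsilon)$. Its value at the origin is $F_{m,h}(0)=(2h)^{-\mu}P_m^{(\alpha,\beta)}(1)$, and $F_{m,h}'(0)=0$ because $F_{m,h}$ is even in $t$. The same holds for $F_{\ell,g}$.

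Next we use the identity
\[
Z_{m,h}'Z_{\ell,g}-Z_{m,h}Z_{\ell,g}'
=t^{2\mu}\bigl(F_{m,h}'F_{\ell,g}-F_{m,h}F_{\ell,g}'\bigr).
\]
This holds because the terms $\mu t^{2\mu-1}F_{m,h}F_{\ell,g}$ coming from differentiating $t^{\mu}$ cancel exactly. This cancellation is the key point, and it is where asymptotic equivalence alone would not suffice: individually $Z'$ behaves like $t^{\mu-1}$, which blows up when $\alpha<1/2$.

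Evenness of $F$ (it is a function of $t^2$) makes the bracket $O(t)$ as $t\downarrow0$. The Wronskian is therefore $O(t^{2\mu+1})=O(t^{2\alpha+2})$, which tends to $0$ since $\alpha>-1$. The cases $\alpha=\pm1/2$ need no separate treatment: for $\alpha=-1/2$ we have $\mu=0$ and the bound is $O(t)$. The claim does not even require evenness, since boundedness of $F'$ gives $O(t^{2\alpha+1})$, and this also tends to $0$ because $2\alpha+1>-1$.

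The main step to check carefully is the $C^1$ regularity of $F_{m,h}$ at $0$ when $\mu$ is non-integer. The factor $(\sin(t/2h)/t)^{\mu}$ is a positive analytic function raised to a real power, so it is smooth, and no fractional-power singularity remains once $t^{\mu}$ has been factored out.
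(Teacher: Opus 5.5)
Your main argument is correct and is essentially the paper's proof: you factor out $t^{\alpha+1/2}$, use that the remaining factor is analytic and even in $t$ (the paper writes it as $C_{m,h}(1+c_{m,h}t^2+O(t^4))$), and conclude that the Wronskian is $O(t^{2\alpha+2})$. One correction: drop your closing remark that evenness is not needed, because boundedness of $F'$ alone gives only $O(t^{2\alpha+1})$, which tends to $0$ only when $2\alpha+1>0$ (not merely $2\alpha+1>-1$), so for $-1<\alpha\leq-1/2$ — precisely the singular range the lemma is meant to cover — the vanishing $F_{m,h}'(0)=F_{\ell,g}'(0)=0$ is indispensable.
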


\begin{proof}
Since $P_m^{(\alpha,\beta)}(1)>0$, the expansions of
$\sin(t/2h)$, $\cos(t/2h)$, and
$P_m^{(\alpha,\beta)}(\cos(t/h))$ at $t=0$ give
\begin{equation}
Z_{m,h}(t)
=
C_{m,h}t^{\alpha+1/2}
\bigl(1+c_{m,h}t^2+O(t^4)\bigr),
\label{eq:endpoint-expansion}
\end{equation}
where
\[
C_{m,h}
=
(2h)^{-\alpha-1/2}P_m^{(\alpha,\beta)}(1)
>0.
\]
The factor multiplying $t^{\alpha+1/2}$ is analytic in $t^2$, so
the expansion may be differentiated. The same statement holds for
$Z_{\ell,g}$, with different constants. Writing
$s=\alpha+1/2$, direct substitution gives
\[
Z_{m,h}'Z_{\ell,g}-Z_{m,h}Z_{\ell,g}'
=
2C_{m,h}C_{\ell,g}
\bigl(c_{m,h}-c_{\ell,g}\bigr)t^{2s+1}
+
O\left(t^{2s+3}\right).
\]
Thus the leading terms cancel, and the expression in~\eqref{eq:wronskian-limit} is
\[
O\left(t^{2\alpha+2}\right).
\]
Because $\alpha>-1$, this tends to zero. The argument also covers
$-1<\alpha<-1/2$, where the individual solutions are unbounded at
the endpoint.
\end{proof}

\begin{remark}\label{rem:singular-endpoint}
When $\alpha\geq-1/2$, the transformed solution is bounded at the
left endpoint; when $-1<\alpha<-1/2$, it is not. In the latter range,
assigning a finite endpoint value at $t=0$ would therefore be
invalid. The requisite condition is instead the vanishing of the
boundary Wronskian; the cancellation
in~\eqref{eq:wronskian-limit} is therefore essential, not merely a
technical regularity detail.
\end{remark}

We shall use the following version of Sturm's comparison theorem,
formulated so that the singular left endpoint is covered by the
Wronskian condition. For the classical comparison theorem,
see~\cite[Section~5.5]{Teschl}; related applications to Jacobi zeros
may be found
in~\cite{AhmedLaforgiaMuldoon,DeanoGilSegura,LunRafaeli}.

\begin{lemma}[Sturm comparison]\label{lem:sturm}
Let $q_1,q_2$ be continuous on $(0,L)$ and satisfy
\[
q_1(t)>q_2(t),
\quad
0<t<L.
\]
Let $u_1,u_2$ be nontrivial solutions of
\[
u_j''+q_j u_j=0,
\quad
j=1,2,
\]
which are both positive in some right neighbourhood of $0$, and
suppose that
\[
\lim_{t\downarrow0}
\bigl(u_1'u_2-u_1u_2'\bigr)=0.
\]
If $b_k<L$ is the $k$th positive zero of $u_2$, then $u_1$ has at
least $k$ zeros in $(0,b_k)$. In particular, if $a_k$ denotes the
$k$th positive zero of $u_1$, then
\[
a_k<b_k.
\]
\end{lemma}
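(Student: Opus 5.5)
The plan is to run the classical Wronskian argument, using the boundary condition $\lim_{t\downarrow0}W(t)=0$ in place of regularity at $t=0$. Put $W\coloneqq u_1'u_2-u_1u_2'$. A direct computation gives
\[
W'=u_1''u_2-u_1u_2''=(q_2-q_1)u_1u_2 .
\]
Let $b_1<b_2<\cdots<b_k$ be the first $k$ positive zeros of $u_2$. These zeros are simple because $u_2$ is nontrivial. The zeros of $u_1$ in any compact subinterval of $(0,L)$ are isolated, so its positive zeros can be listed in increasing order. It therefore suffices to show that $u_1$ vanishes in each of the intervals $(0,b_1),(b_1,b_2),\dots,(b_{k-1},b_k)$. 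Then $u_1$ has at least $k$ zeros in $(0,b_k)$, and in particular $a_k<b_k$.

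\emph{First interval.} Suppose $u_1$ has no zero in $(0,b_1)$. Both functions are positive near $0$ and do not vanish there, so $u_1>0$ and $u_2>0$ on $(0,b_1)$. Hence $W'<0$ on $(0,b_1)$. Integrating from $\varepsilon$ to $b_1$ and letting $\varepsilon\downarrow0$, the hypothesis $\lim W=0$ gives
\[
W(b_1)=\lim_{\varepsilon\downarrow0}\int_\varepsilon^{b_1}(q_2-q_1)u_1u_2\,dt<0 .
\]
The limit exists by monotone convergence, since the integrand has a fixed sign. On the other hand, $W(b_1)=u_1(b_1)u_2'(b_1)$. Since $u_2$ decreases through its simple zero $b_1$, we have $u_2'(b_1)<0$, and $u_1(b_1)\geq0$ by continuity. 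So $W(b_1)\geq0$, a contradiction. Hence $u_1$ vanishes somewhere in $(0,b_1)$.

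\emph{Interior intervals.} Consider $(b_{j-1},b_j)$ with $2\leq j\leq k$, and suppose $u_1$ has no zero there. We may flip the signs of $u_1$ and $u_2$ independently, since $W$ and the conclusion only change by sign; so assume both are positive on the open interval. Then $u_2'(b_{j-1})>0$, $u_2'(b_j)<0$, $u_1(b_{j-1})\geq0$, and $u_1(b_j)\geq0$. Integrating $W'<0$ over $(b_{j-1},b_j)$ gives
\[
u_1(b_j)u_2'(b_j)-u_1(b_{j-1})u_2'(b_{j-1})<0 .
\]
The left side is a nonpositive term minus a nonnegative term, so the strict inequality is consistent only if it is forced. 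To close the argument, note that the integral of $(q_2-q_1)u_1u_2$ over the interval is strictly negative, whereas the sign conditions at the endpoints make the left side $\leq0$, which gives no contradiction yet. The fix is to use the boundary signs more carefully: with $u_1,u_2>0$ inside, $W(b_j)=u_1(b_j)u_2'(b_j)\leq0$ and $W(b_{j-1})=u_1(b_{j-1})u_2'(b_{j-1})\geq0$, whereas $W$ decreasing would need $W(b_j)<W(b_{j-1})$, again no contradiction by itself.

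I therefore expect this interior step to be the main obstacle, and the cleanest remedy is to argue with $u_2$ playing the oscillating role in the standard orientation. Since $q_1>q_2$, $u_1$ is the more oscillatory solution, so the correct classical statement is that between consecutive zeros of $u_2$ there is a zero of $u_1$. I would prove it as follows. Suppose $u_1>0$ on $[b_{j-1},b_j]$ after a sign normalisation, with $u_2>0$ on $(b_{j-1},b_j)$. Then
\[
W(b_j)-W(b_{j-1})=u_1(b_j)u_2'(b_j)-u_1(b_{j-1})u_2'(b_{j-1}).
\]
With $u_1>0$ at both endpoints, the right side is strictly negative, because $u_2'(b_j)<0<u_2'(b_{j-1})$. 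Meanwhile the left side equals $\int(q_2-q_1)u_1u_2<0$. Both sides are negative, so this orientation does not contradict either.

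So instead I will use the Sturm quotient, the Picone identity
\[
\frac{d}{dt}\Bigl(\frac{u_2}{u_1}W\Bigr)=(q_1-q_2)u_2^2+\Bigl(u_2'-\frac{u_2u_1'}{u_1}\Bigr)^2 ,
\]
valid wherever $u_1\neq0$. Integrating it over $(b_{j-1},b_j)$, the boundary terms vanish because $u_2(b_{j-1})=u_2(b_j)=0$ and $u_1$ is nonzero on the closed interval. The right side is strictly positive, which gives the contradiction.

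The same Picone identity also handles the first interval. There the boundary term at $0$ is $\frac{u_2}{u_1}W$, and this tends to $0$ because $W\to0$ while $u_2/u_1$ stays bounded near $0$ for our solutions. Boundedness of $u_2/u_1$ is the one point needing extra care in the singular case: it holds because both solutions share the leading behaviour $Ct^{\alpha+1/2}$. In the general lemma, the direct Wronskian sign argument of the first interval avoids it.
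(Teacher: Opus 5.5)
Your overall strategy is the paper's: use $W=u_1'u_2-u_1u_2'$, with $W'=(q_2-q_1)u_1u_2$, and let the hypothesis $W(0^+)=0$ replace regularity at $0$. The execution, however, contains a sign error that breaks two steps.

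At a zero $b$ of $u_2$ one has $W(b)=-u_1(b)u_2'(b)$, not $u_1(b)u_2'(b)$. In the first interval you write $W(b_1)=u_1(b_1)u_2'(b_1)$ and then conclude $W(b_1)\geq0$ from $u_2'(b_1)<0\leq u_1(b_1)$. By your own formula this product is $\leq0$, so as written there is no contradiction with $W(b_1)<0$. The conclusion is correct only because the formula is wrong.

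The same slip is why the interior step seemed to fail. With the correct sign, if $u_1,u_2>0$ on $(b_{j-1},b_j)$, then $W(b_{j-1})=-u_1(b_{j-1})u_2'(b_{j-1})\leq0$ and $W(b_j)=-u_1(b_j)u_2'(b_j)\geq0$. This contradicts $W'<0$ on the interval at once. That is exactly the paper's argument. It uses only $u_1\geq0$ at the endpoints, so it automatically covers the case where $u_1$ vanishes at $b_{j-1}$ or $b_j$.

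Your Picone fallback, as written, does not close the gap. Its sign is also off: the quantity whose derivative equals $(q_1-q_2)u_2^2+(u_2'-u_2u_1'/u_1)^2$ is $-\frac{u_2}{u_1}W$, though that is harmless here. The real problem is that you assume $u_1\neq0$ on the closed interval $[b_{j-1},b_j]$. Negating the claim gives only $u_1\neq0$ on the open interval $(b_{j-1},b_j)$. Yet you need a zero of $u_1$ strictly inside each of the disjoint open intervals $(0,b_1),\dots,(b_{k-1},b_k)$ in order to count $k$ distinct zeros. If $u_1(b_{j-1})=0$ or $u_1(b_j)=0$, you cannot integrate the identity up to that endpoint without a further limiting argument. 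For example, both zeros are simple, so $u_2/u_1$ extends continuously there while $W$ vanishes, and the boundary term still tends to $0$.

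Either add that limiting argument, or, more simply, correct the sign of $W(b)$ and use the direct Wronskian contradiction on every interval, as the paper does.
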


\begin{proof}
Let
\[
W(t)\coloneqq u_1'(t)u_2(t)-u_1(t)u_2'(t).
\]
Then
\begin{equation}
W'(t)=\bigl(q_2(t)-q_1(t)\bigr)u_1(t)u_2(t).
\label{eq:sturm-wronskian-derivative}
\end{equation}
We first prove that $u_1$ has a zero in $(0,b_1)$. Both solutions are
positive near $0$, and $u_2$ has no zero in $(0,b_1)$; hence $u_2$
is positive throughout that interval and $u_2'(b_1)<0$. Suppose, to
the contrary, that $u_1$ has no zero in $(0,b_1)$. It is then
positive there. For $0<\varepsilon<t<b_1$, integration
of~\eqref{eq:sturm-wronskian-derivative} gives
\[
W(t)-W(\varepsilon)
=
\int_\varepsilon^t
\bigl(q_2(s)-q_1(s)\bigr)u_1(s)u_2(s)\,ds<0.
\]
The assumed endpoint limit permits $\varepsilon\downarrow0$ and
yields $W(t)<0$ for $0<t<b_1$: strictness follows by integrating
over any compact subinterval of $(0,t)$, on which the continuous
integrand is strictly negative. Moreover, $W$ is
strictly decreasing. At the regular point $b_1$, however, continuity
gives
\[
W(b_1)=-u_1(b_1)u_2'(b_1)\geq0,
\]
including the case $u_1(b_1)=0$. This is a contradiction. Thus
$u_1$ has a zero in $(0,b_1)$.

For $j=2,\ldots,k$, the points $b_{j-1}$ and $b_j$ are regular
consecutive zeros of $u_2$. Suppose that $u_1$ had no zero between
them. After changing signs if necessary, both solutions would be
positive on $(b_{j-1},b_j)$. Hence
\[
u_2'(b_{j-1})>0,
\quad
u_2'(b_j)<0,
\]
and continuity would give
\[
W(b_{j-1})\leq0,
\quad
W(b_j)\geq0.
\]
This contradicts $W'<0$ throughout the interval. Thus $u_1$ has a
zero in $(b_{j-1},b_j)$. It follows that $u_1$ has at least one zero
in each of the disjoint intervals
\[
(0,b_1),(b_1,b_2),\ldots,(b_{k-1},b_k).
\]
Thus $u_1$ has at least $k$ zeros in $(0,b_k)$, and therefore
$a_k<b_k$.
\end{proof}

\section{The trigonometric comparison}
\label{sec:trigonometric}

Recall the functions $R$ and $T$ defined in~\eqref{eq:RT}.
With $A$ and $B$ as in~\eqref{eq:AB}, the profile introduced in
Theorems~\ref{thm:increasing} and~\ref{thm:decreasing} is
\begin{equation}
\Phi_{\alpha,\beta}(x)
=
A R(x)+B T(x).
\label{eq:Phi-RT}
\end{equation}

\begin{lemma}\label{lem:RT}
For every $0<x<\pi/2$,
\begin{equation}
0<R(x)<1
\label{eq:R-bounds}
\end{equation}
and
\begin{equation}
T(x)>3R(x).
\label{eq:T3R}
\end{equation}
Moreover,
\begin{equation}
\lim_{x\downarrow0}R(x)=\frac13,
\quad
\lim_{x\downarrow0}T(x)=1,
\label{eq:RT-zero}
\end{equation}
and
\begin{equation}
\lim_{x\uparrow\pi/2}R(x)=1,
\quad
\lim_{x\uparrow\pi/2}T(x)=+\infty.
\label{eq:RT-pi}
\end{equation}
\end{lemma}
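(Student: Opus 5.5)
We clear denominators and reduce each claim to the sign of an elementary function on $(0,\pi/2)$, using Taylor expansion at $0$ and direct evaluation at $\pi/2$.

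\emph{Limits.} For the limits \eqref{eq:RT-zero} we expand at $x=0$. Since $x\cot x=1-x^2/3-x^4/45+O(x^6)$, we get $1-x\cot x=x^2/3+O(x^4)$. Dividing by $\sin^2x=x^2+O(x^4)$ gives $R(x)\to1/3$. Similarly $x\tan x\to0$ and $\cos x\to1$, so $T(x)\to1$. For \eqref{eq:RT-pi} we evaluate at $x=\pi/2$. There $x\cot x\to0$ and $\sin^2x\to1$, so $R\to1$. Also $1+x\tan x\to+\infty$ and $\cos^2x\to0^+$, so $T\to+\infty$.

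\emph{Bounds on $R$.} For \eqref{eq:R-bounds}, positivity of $R$ is the classical inequality $\tan x>x$ on $(0,\pi/2)$, which gives $x\cot x<1$. The upper bound $R<1$ is equivalent to
\[
1-x\cot x<\sin^2x,
\quad\text{i.e.}\quad
\cos^2x<x\cot x.
\]
Multiplying by $\sin x/\cos x>0$, this becomes $\sin x\cos x<x$, that is, $\sin 2x<2x$. This holds for all $x>0$.

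\emph{The inequality $T>3R$.} This is the main step. Multiplying \eqref{eq:T3R} by $\sin^2x\cos^2x>0$, it is equivalent to
\[
\sin^2x\,(1+x\tan x)>3\cos^2x\,(1-x\cot x).
\]
Multiplying further by $\sin x\cos x>0$ clears the remaining quotients. Writing $s=\sin x$ and $c=\cos x$, the inequality becomes
\[
F(x)\coloneqq s^3c+xs^4-3c^3s+3xc^4>0.
\]
We simplify using double-angle identities. First,
\[
s^3c-3c^3s=sc(s^2-3c^2)=\tfrac12\sin2x\,(-1-2\cos2x).
\]
Second,
\[
s^4+3c^4=\tfrac14\bigl[(1-\cos2x)^2+3(1+\cos2x)^2\bigr]=1+\cos2x+\cos^22x.
\]
Substituting $u=2x\in(0,\pi)$, we need
\[
G(u)\coloneqq u\,(1+\cos u+\cos^2u)-\sin u\,(1+2\cos u)>0.
\]
The Taylor expansion at $0$ gives $G(0)=0$ and $G(u)=u^5/15+O(u^7)$. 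This is consistent with the claim, since the coefficient of $R$ in the difference $T-3R$ has limit $1-1=0$ at $0$, so the difference vanishes to second order.

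To prove $G>0$ we differentiate. Using $\cos^2u=(1+\cos 2u)/2$ and $\sin u\cos u=\tfrac12\sin2u$, we have
\[
G(u)=\tfrac32u+u\cos u+\tfrac u2\cos2u-\sin u-\sin2u.
\]
Differentiating and simplifying,
\[
G'(u)=\tfrac12-u\sin u-u\sin2u-\tfrac12\cos2u=\sin^2u-u\sin u\,(1+2\cos u).
\]
Hence $G'(u)=\sin u\,H(u)$ with
\[
H(u)=\sin u-u(1+2\cos u).
\]
The sign of $H$ is not uniform on $(0,\pi)$, so a one-step monotonicity argument does not apply. We would therefore split the interval.

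\emph{Splitting the interval.} On the part where $1+2\cos u\le0$, that is $u\ge2\pi/3$, we have $H>0$ directly, since $\sin u>0$ there. On $(0,2\pi/3)$ we expect $H<0$, because $H(u)\approx -2u$ near $0$. Then $G$ decreases there, which conflicts with $G(u)\sim u^5/15>0$. This signals an algebra slip, and the first thing we would do is recheck $G'$.

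If $G'$ really changes sign, the fallback is to show directly that $G(u)>0$ by comparison. The decomposition
\[
G(u)=u(1+\cos u+\cos^2u)-\sin u-\sin u\cos u
\]
is natural here. We would compare $u$ against $\sin u$ and $\tan$-type bounds on $(0,\pi/2]$. On $[\pi/2,\pi)$ the sign is checked by the direct estimate $u(1+\cos u+\cos^2u)\ge\tfrac34u\ge\tfrac34\cdot\tfrac\pi2>\sin u\,(1+2\cos u)$, where the right-hand side is at most $1$.

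The main obstacle is this elementary but delicate sign verification for $G$ near $u=0$, where $G$ vanishes to fifth order. There we would rely on an alternating Taylor series with explicit remainder bounds, valid up to $u=\pi/2$.
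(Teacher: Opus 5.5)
Your limits and your bounds $0<R<1$ match the paper's argument, and your reduction of $T>3R$ to $G(u)>0$ on $(0,\pi)$ is correct. The gap is that the one nontrivial step, $G>0$, is never proved. Your derivative is wrong. From $G(u)=\tfrac32u+u\cos u+\tfrac u2\cos2u-\sin u-\sin2u$ one gets
\[
G'(u)=\tfrac32(1-\cos2u)-u\sin u-u\sin2u=\sin u\,\bigl(3\sin u-u(1+2\cos u)\bigr),
\]
which has $3\sin^2u$, not $\sin^2u$. The sign change you find in $H$ near $0$ comes from this slip.

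You noticed the contradiction with $G>0$ but left it unresolved. Your fallback on $(0,\pi/2]$ (``$\tan$-type bounds'', ``alternating Taylor series with explicit remainder bounds'') is only announced, not carried out. Since $G$ vanishes to fifth order at $0$, that is exactly where the work lies. As written, $G>0$ is established only on $[\pi/2,\pi)$, where your estimate is complete. Two smaller errors: your displayed decomposition should read $-\sin u-2\sin u\cos u$, and $G(u)=u^5/10+O(u^7)$, not $u^5/15$.

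With the corrected derivative your route does close. Set $H(u)=3\sin u-u(1+2\cos u)$. For $u\geq2\pi/3$, the factor $1+2\cos u\leq0$ gives $H(u)\geq3\sin u>0$. On $(0,2\pi/3)$ one has $H(0)=0$ and
\[
H'(u)=2u\sin u-2\sin^2\tfrac u2>u\sin\tfrac u2\bigl(4\cos\tfrac u2-1\bigr)>0.
\]
Hence $G'>0$ on $(0,\pi)$, and $G>G(0)=0$.

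This is still a two-level sign argument. The paper avoids it by working in $y=\tan x$. Your $F$ equals $3\cos^4x$ times the paper's function $\tfrac13\tan^3x(1+x\tan x)-\tan x+x$. Using $\sec^2x=1+\tan^2x$, that function's derivative collapses to $\tfrac43\tan^4x+\tfrac43x\tan^3x\sec^2x>0$, so a single monotonicity step suffices.
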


\begin{proof}
The inequality $\tan x>x$ gives $1-x\cot x>0$, and hence
$R(x)>0$. Furthermore,
\[
1-x\cot x<\sin^2x
\]
is equivalent to
\[
\sin x\cos x<x,
\]
which follows from $\sin(2x)<2x$. This
proves~\eqref{eq:R-bounds}. To prove~\eqref{eq:T3R}, put $y=\tan x$.
A direct simplification shows that the desired inequality is
equivalent to
\begin{equation}
y^3(1+xy)>3(y-x).
\label{eq:y-inequality}
\end{equation}
Define
\[
F(x)
\coloneqq
\frac13\tan^3x\bigl(1+x\tan x\bigr)-\tan x+x.
\]
Extend $F$ continuously to the origin by setting $F(0)=0$.
Differentiation gives
\[
F'(x)
=
\frac43\tan^4x
+
\frac43x\tan^3x\sec^2x
>0.
\]
Thus $F(x)>0$ for $0<x<\pi/2$, which is
precisely~\eqref{eq:y-inequality}. The limits
in~\eqref{eq:RT-zero} follow from the Taylor expansions of $\cot x$,
$\tan x$, $\sin x$, and $\cos x$ at the origin. The limits
in~\eqref{eq:RT-pi} follow directly from the definitions.
\end{proof}

It follows in particular that
\[
\lim_{x\downarrow0}\Phi_{\alpha,\beta}(x)
=
\frac{A+3B}{3},
\]
whereas
\[
\lim_{x\uparrow\pi/2}\Phi_{\alpha,\beta}(x)
=
\begin{cases}
+\infty,&B>0,\\[7pt]
A,&B=0,\\[7pt]
-\infty,&B<0
\end{cases}.
\]
Since $\Phi_{\alpha,\beta}$ is continuous on $(0,\pi/2)$, these
limits show that $m_{\alpha,\beta}$ is finite when $B\geq0$ and that
$M_{\alpha,\beta}$ is finite when $B\leq0$, as asserted in
Theorems~\ref{thm:increasing} and~\ref{thm:decreasing}.

\begin{proposition}\label{prop:spectral-sign}
Put $C\coloneqq A+3B$. Then
\[
\Phi_{\alpha,\beta}(x)\geq0,
\quad0<x<\frac{\pi}{2}
\]
if and only if $B\geq0$ and $C\geq0$. Likewise,
\[
\Phi_{\alpha,\beta}(x)\leq0,
\quad0<x<\frac{\pi}{2}
\]
if and only if $B\leq0$ and $C\leq0$. Moreover, $\Phi_{\alpha,\beta}$ vanishes identically precisely when
$B=C=0$, which, in turn, occurs precisely when
\[
|\alpha|=|\beta|=\frac12.
\]
\end{proposition}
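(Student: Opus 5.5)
The plan is to write $\Phi_{\alpha,\beta}$ in the form already used in the introduction,
\[
\Phi_{\alpha,\beta}(x)=C\,R(x)+B\bigl(T(x)-3R(x)\bigr),
\]
which is an identity because $AR+BT=(A+3B)R+B(T-3R)$. By Lemma~\ref{lem:RT}, both coefficient functions are strictly positive on $(0,\pi/2)$: $R>0$ by~\eqref{eq:R-bounds}, and $T-3R>0$ by~\eqref{eq:T3R}. This gives sufficiency at once. If $B\geq0$ and $C\geq0$, then $\Phi_{\alpha,\beta}\geq0$. If $B\leq0$ and $C\leq0$, then $\Phi_{\alpha,\beta}\leq0$.

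For necessity, use the endpoint behaviour recorded after Lemma~\ref{lem:RT}:
\[
\lim_{x\downarrow0}\Phi_{\alpha,\beta}(x)=\frac{C}{3},
\]
while the limit as $x\uparrow\pi/2$ is $+\infty$, $A$, or $-\infty$ according as $B>0$, $B=0$, or $B<0$.
\begin{itemize}
\item Suppose $\Phi_{\alpha,\beta}\geq0$ on $(0,\pi/2)$. The limit at $0$ forces $C\geq0$, and the limit at $\pi/2$ excludes $B<0$. Hence $B\geq0$.
\item Suppose $\Phi_{\alpha,\beta}\leq0$. The same two limits give $C\leq0$ and exclude $B>0$, so $B\leq0$.
\end{itemize}
The case $B=0$ needs no separate treatment: it is allowed in both conclusions, and $C$ is already controlled by the limit at the origin. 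So the only analytic input is Lemma~\ref{lem:RT}.

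For the vanishing statement, suppose $\Phi_{\alpha,\beta}\equiv0$. Then it is both nonnegative and nonpositive, so the two equivalences just proved give $B\geq0$, $B\leq0$, $C\geq0$, and $C\leq0$. Hence $B=C=0$. Conversely, if $B=C=0$, the displayed identity shows $\Phi_{\alpha,\beta}\equiv0$.

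It remains to translate $B=C=0$ into the parameters. Since $C=A+3B$, the condition $B=C=0$ is equivalent to $A=B=0$. By~\eqref{eq:AB}, $A=0$ means $4\alpha^2=1$ and $B=0$ means $4\beta^2=1$. Both choices of sign are compatible with $\alpha,\beta>-1$, so this is exactly $|\alpha|=|\beta|=1/2$.

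I do not expect a genuine obstacle. The one place needing care is the necessity direction when $B=0$: there the limit at $\pi/2$ is the finite value $A$, and one should note explicitly that this case is already consistent with both sign conclusions rather than requiring a further argument.
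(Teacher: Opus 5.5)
Your proposal is correct and follows essentially the same argument as the paper: the identity $\Phi_{\alpha,\beta}=CR+B(T-3R)$ together with Lemma~\ref{lem:RT} gives sufficiency, and the endpoint limits $C/3$ at $0$ and $\pm\infty$ (or $A$) at $\pi/2$ give necessity. The vanishing case and the reduction to $|\alpha|=|\beta|=1/2$ are handled in the same way as in the paper.
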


\begin{proof}
The identity
\[
\Phi_{\alpha,\beta}(x)
=
C R(x)+B\bigl(T(x)-3R(x)\bigr)
\]
and Lemma~\ref{lem:RT} prove sufficiency in both sign assertions.
Conversely, the limit at the origin shows that a nonnegative profile
must have $C\geq0$, whereas a nonpositive profile must have $C\leq0$.
The limit at $\pi/2$ similarly excludes $B<0$ in the first case and
$B>0$ in the second. This proves both equivalences.

If the profile vanishes identically, the two sign assertions give
$B=C=0$; the converse follows from the displayed identity. Finally,
$B=0$ is equivalent to $|\beta|=1/2$, and then $C=0$ is equivalent
to $|\alpha|=1/2$.
\end{proof}

The following exact differentiation formula explains the role of
these functions in the comparison argument.

\begin{lemma}\label{lem:potential-derivative}
Fix $\sigma>-1$ and set $\delta\coloneqq\rho-\sigma$.
For $h>0$ and $0<t<h\pi$, define
\begin{equation}
Q_h(t)
\coloneqq
\left(1+\frac{\delta}{h}\right)^2
+
\frac{A}{h^2\sin^2(t/2h)}
+
\frac{B}{h^2\cos^2(t/2h)}.
\label{eq:Qh}
\end{equation}
If $x=t/(2h)$, then
\begin{equation}
\frac{\partial Q_h(t)}{\partial h}
=
-\frac{2}{h^3}
\left(
\delta(h+\delta)+A R(x)+B T(x)
\right).
\label{eq:Q-derivative}
\end{equation}
\end{lemma}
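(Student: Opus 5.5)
The proof is a direct partial differentiation of~\eqref{eq:Qh} with respect to $h$, holding $t$ fixed. I would treat the three summands of $Q_h$ separately and then collect them under the common factor $-2/h^3$. The only point needing care is that $x=t/(2h)$ depends on $h$. Its derivative is
\[
\frac{\partial x}{\partial h}=-\frac{t}{2h^2}=-\frac{x}{h},
\]
and this chain-rule factor is what produces the terms $x\cot x$ and $x\tan x$ in $R$ and $T$.

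For the constant part, $(1+\delta/h)^2$ has $h$-derivative $2(1+\delta/h)\cdot(-\delta/h^2)$. This equals $-2\delta(h+\delta)/h^3$, which is the first term of~\eqref{eq:Q-derivative}.

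For the $A$-term, write it as $A h^{-2}\sin^{-2}x$ and apply the product rule. The factor $h^{-2}$ contributes $-2Ah^{-3}\sin^{-2}x$. The factor $\sin^{-2}x$ contributes
\[
Ah^{-2}\cdot\bigl(-2\sin^{-3}x\cos x\bigr)\cdot\left(-\frac{x}{h}\right)=2Ah^{-3}\frac{x\cos x}{\sin^3x}.
\]
The sum of these two pieces is
\[
-\frac{2A}{h^3}\,\frac{1-x\cot x}{\sin^2x}=-\frac{2A}{h^3}R(x).
\]

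For the $B$-term, the same procedure applies to $Bh^{-2}\cos^{-2}x$. The factor $h^{-2}$ gives $-2Bh^{-3}\cos^{-2}x$. The factor $\cos^{-2}x$ gives
\[
Bh^{-2}\cdot\bigl(2\cos^{-3}x\sin x\bigr)\cdot\left(-\frac{x}{h}\right)=-2Bh^{-3}\frac{x\tan x}{\cos^2x}.
\]
The sum is
\[
-\frac{2B}{h^3}\,\frac{1+x\tan x}{\cos^2x}=-\frac{2B}{h^3}T(x).
\]

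Adding the three contributions gives~\eqref{eq:Q-derivative}. There is no real obstacle here: the calculation is elementary, and the only place where an error could enter is the sign bookkeeping in the chain-rule factor $-x/h$. The hypotheses $\sigma>-1$ and $0<t<h\pi$ are not used in the differentiation. They only guarantee that $0<x<\pi/2$, so that $R(x)$ and $T(x)$ are defined and the identity can be combined with Lemma~\ref{lem:RT} later.
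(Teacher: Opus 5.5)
Your proposal is correct and matches the paper's proof: differentiate the three summands of $Q_h$ in $h$ at fixed $t$, with the chain-rule factor $\partial x/\partial h=-x/h$ producing the $x\cot x$ and $x\tan x$ terms in $R$ and $T$. You spell this step out where the paper only states the resulting identities, and one small correction: it is $h>0$ and $0<t<h\pi$, not $\sigma>-1$, that place $x$ in $(0,\pi/2)$.
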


\begin{proof}
The first term in~\eqref{eq:Qh} contributes
\[
-\frac{2}{h^3}\delta(h+\delta).
\]
Since $x=t/(2h)$,
\begin{align*}
\frac{\partial}{\partial h}
\left(
\frac{1}{h^2\sin^2(t/2h)}
\right)
&=
-\frac{2}{h^3}
\frac{1-x\cot x}{\sin^2x},
\\[7pt]
\frac{\partial}{\partial h}
\left(
\frac{1}{h^2\cos^2(t/2h)}
\right)
&=
-\frac{2}{h^3}
\frac{1+x\tan x}{\cos^2x}.
\end{align*}
Combining these three identities proves~\eqref{eq:Q-derivative}.
\end{proof}

\begin{proposition}\label{prop:sharp-potential}
Let $\alpha,\beta>-1$ and $\sigma>-1$, and let $Q_h$ be defined
by~\eqref{eq:Qh}. Then the following assertions hold.
\begin{enumerate}
\item If $|\beta|>1/2$, the inequality
\[
\frac{\partial Q_h(t)}{\partial h}\leq0
\]
cannot hold for every $h\geq1+\sigma$ and $0<t<h\pi$. If
$|\beta|\leq1/2$, it holds throughout that domain if and only if
\[
\sigma\leq\sigma_\uparrow(\alpha,\beta).
\]
\item If $|\beta|<1/2$, the inequality
\[
\frac{\partial Q_h(t)}{\partial h}\geq0
\]
cannot hold for every $h\geq1+\sigma$ and $0<t<h\pi$. If
$|\beta|\geq1/2$, it holds throughout that domain if and only if
\[
\sigma\geq\sigma_\downarrow(\alpha,\beta).
\]
\end{enumerate}
In either case, for each fixed $t>0$, the function
$h\mapsto Q_h(t)$ is constant on a nondegenerate interval of its
admissible domain if and only if
\[
A=B=0,
\quad
\sigma=\rho.
\]
\end{proposition}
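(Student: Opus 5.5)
By Lemma~\ref{lem:potential-derivative}, the sign of $\partial Q_h/\partial h$ is opposite to that of
\[
G(h,x)\coloneqq\delta(h+\delta)+\Phi_{\alpha,\beta}(x),\qquad x=t/(2h)\in(0,\pi/2).
\]
For fixed $h$, the map $t\mapsto x$ is a bijection of $(0,h\pi)$ onto $(0,\pi/2)$. The domain $\{h\geq1+\sigma,\ 0<t<h\pi\}$ is therefore the product $\{h\geq 1+\sigma\}\times(0,\pi/2)$ in the variables $(h,x)$. The plan is to separate the two variables and reduce each assertion to a condition on $\inf$ or $\sup$ of $\Phi_{\alpha,\beta}$ combined with the extreme values of the affine-in-$h$ term $\delta(h+\delta)$.

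For assertion (1) we need $G\geq0$ on the whole domain. If $|\beta|>1/2$, then $B<0$, and $\Phi_{\alpha,\beta}(x)\to-\infty$ as $x\uparrow\pi/2$ by Lemma~\ref{lem:RT}. So for any fixed $h$, $G(h,x)<0$ for $x$ near $\pi/2$, and the inequality fails.

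If $|\beta|\leq1/2$, then $m_{\alpha,\beta}$ is finite, and the condition is
\[
\inf_{h\geq1+\sigma}\delta(h+\delta)+m_{\alpha,\beta}\geq0.
\]
Here $h+\delta\geq 1+\sigma+\rho-\sigma=1+\rho>0$, using $\rho>0$ since $\alpha+\beta>-2$ gives $\rho>-1/2$; more carefully, $1+\rho>1/2>0$. We split on the sign of $\delta$:
\begin{itemize}
\item If $\delta\geq0$, the infimum is attained at $h=1+\sigma$ and equals $\delta(1+\rho)$.
\item If $\delta<0$, then $\delta(h+\delta)\to-\infty$ as $h\to\infty$, so the condition fails.
\end{itemize}
Hence the condition is equivalent to $\delta\geq0$ together with $\delta(1+\rho)\geq -m_{\alpha,\beta}$, that is, $\delta\geq (-m_{\alpha,\beta})_+/(1+\rho)$. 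This says exactly $\sigma\leq\sigma_\uparrow(\alpha,\beta)$.

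One subtlety must be checked: $m$ is an infimum that may not be attained, but we only need the non-strict inequality, so the equivalence is correct with the infimum.

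Assertion (2) is symmetric. We need $G\leq0$. If $|\beta|<1/2$, then $B>0$ and $\Phi_{\alpha,\beta}\to+\infty$ at $\pi/2$, so the inequality fails. If $|\beta|\geq1/2$, the condition is
\[
\sup_h\delta(h+\delta)+M_{\alpha,\beta}\leq0.
\]
This forces $\delta\leq0$; otherwise $\delta(h+\delta)$ is unbounded above. With $\delta\leq0$ and $h+\delta\geq1+\rho>0$, the supremum is attained at $h=1+\sigma$ and equals $\delta(1+\rho)$. So the condition becomes $-\delta(1+\rho)\geq M_{\alpha,\beta}$, together with $\delta\leq0$. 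Combined, this reads $-\delta\geq (M)_+/(1+\rho)$, i.e. $\sigma\geq\sigma_\downarrow$.

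For the constancy claim, fix $t>0$. The function $h\mapsto Q_h(t)$ is real-analytic, so it is constant on a nondegenerate interval if and only if $G(h,t/(2h))=0$ for all $h$ in that interval. In that case
\[
\delta(h+\delta)=-\Phi_{\alpha,\beta}(t/(2h)).
\]
Two cases need to be excluded:
\begin{itemize}
\item \emph{$\Phi_{\alpha,\beta}\not\equiv 0$.} The function $\Phi_{\alpha,\beta}$ is real-analytic on $(0,\pi/2)$ and nonconstant. Indeed, if it were constant, the limits at $0$ and $\pi/2$ from Lemma~\ref{lem:RT} would force $B=0$ and $A/3=A$, hence $A=0$. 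As $h$ varies over an interval, $x=t/(2h)$ varies over an interval. One must then show that $\delta(t/(2x)+\delta)+\Phi_{\alpha,\beta}(x)\equiv0$ on an $x$-interval is impossible. This would mean $\Phi_{\alpha,\beta}(x)=-\delta^2-\delta t/(2x)$ identically, by analytic continuation on all of $(0,\pi/2)$. For $B\neq0$ this contradicts the behaviour at $\pi/2$, where the right side stays bounded. For $B=0$, $A\neq 0$, it contradicts the behaviour at $0$: the right side is either unbounded (if $\delta\neq0$) or equal to $0$, while $\Phi_{\alpha,\beta}=AR\to A/3\neq0$ with $AR$ bounded.
\item \emph{$\Phi_{\alpha,\beta}\equiv0$.} Then we need $\delta(h+\delta)\equiv0$ on the interval, so $\delta=0$.
\end{itemize}
Conversely, $A=B=0$ and $\sigma=\rho$ give $Q_h\equiv1$.

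The main obstacle is this constancy step, namely ruling out the exotic identity $\Phi_{\alpha,\beta}(x)=-\delta^2-\delta t/(2x)$. The endpoint asymptotics of Lemma~\ref{lem:RT} together with analyticity should suffice.
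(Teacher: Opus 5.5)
Your proposal is correct and follows essentially the paper's argument. You reduce via Lemma~\ref{lem:potential-derivative} to the sign of $\delta(h+\delta)+\Phi_{\alpha,\beta}(x)$ on the product domain, get the sign of $\delta$ from $h\to\infty$ and the sign of $B$ from $x\uparrow\pi/2$, and obtain the threshold at $h=1+\sigma$ using $h+\delta\geq1+\rho>0$. The only variation is in the constancy step: the paper continues $Q_h(t)$ analytically in $h$ and compares its limits as $h\to\infty$ and $h\downarrow t/\pi$ (using $\pi^2\neq4$), whereas you continue the bracket identity analytically in $x=t/(2h)$ and compare behaviour at $x\downarrow0$ and $x\uparrow\pi/2$. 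These are the same two endpoints seen in the other variable, and your version is equally valid.
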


\begin{proof}
Put
\[
\delta\coloneqq\rho-\sigma,
\quad
\Phi\coloneqq\Phi_{\alpha,\beta}.
\]
Since $\alpha,\beta>-1$,
\[
1+\rho
=
\frac{\alpha+\beta+3}{2}
>
\frac12.
\]
By~\eqref{eq:Q-derivative}, the first inequality is equivalent to
\begin{equation}
\delta(h+\delta)+\Phi(x)\geq0,
\quad
h\geq1+\sigma,
\quad
0<x<\frac{\pi}{2}.
\label{eq:positive-bracket}
\end{equation}
Suppose that~\eqref{eq:positive-bracket} holds. Fix
$x\in(0,\pi/2)$. For each $h$, this value of $x$ corresponds to the
admissible point $t=2hx$. Letting $h\to\infty$ therefore shows that
$\delta\geq0$. For any fixed admissible $h$, letting
$x\uparrow\pi/2$ and using~\eqref{eq:RT-pi} gives $B\geq0$. At
$h=1+\sigma$ one has
\[
h+\delta=1+\rho,
\]
and hence
\[
\delta(1+\rho)+m_{\alpha,\beta}\geq0.
\]
These three conditions are equivalent to
\[
|\beta|\leq\frac12,
\quad
\delta\geq
\frac{(-m_{\alpha,\beta})_+}{1+\rho},
\]
which is precisely
\[
\sigma\leq\sigma_\uparrow(\alpha,\beta).
\]

Conversely, under these conditions,
\[
\delta(h+\delta)
\geq
\delta(1+\rho)
\geq
-m_{\alpha,\beta}
\]
for every $h\geq1+\sigma$. Since
\[
\Phi(x)\geq m_{\alpha,\beta},
\]
inequality~\eqref{eq:positive-bracket} follows. This proves the first
assertion.

The second assertion follows similarly; we record the details to
keep the signs explicit. The required inequality is equivalent to
\begin{equation}
\delta(h+\delta)+\Phi(x)\leq0,
\quad
h\geq1+\sigma,
\quad
0<x<\frac{\pi}{2}.
\label{eq:negative-bracket}
\end{equation}
Fixing $x$ and letting $h\to\infty$ shows that $\delta\leq0$, while
letting $x\uparrow\pi/2$ at fixed $h$ gives $B\leq0$. At
$h=1+\sigma$ one obtains
\[
\delta(1+\rho)+M_{\alpha,\beta}\leq0.
\]
Equivalently,
\[
|\beta|\geq\frac12,
\quad
-\delta\geq
\frac{(M_{\alpha,\beta})_+}{1+\rho},
\]
which is precisely
$\sigma\geq\sigma_\downarrow(\alpha,\beta)$.
Conversely, since $\delta\leq0$ and
$h+\delta\geq1+\rho$,
\[
\delta(h+\delta)
\leq
\delta(1+\rho)
\leq
-M_{\alpha,\beta}.
\]
Together with
\[
\Phi(x)\leq M_{\alpha,\beta},
\]
this proves~\eqref{eq:negative-bracket}.

It remains to examine the equality case. Suppose that, for some
fixed $t>0$, $Q_h(t)$ is constant on a nondegenerate interval of
admissible values of $h$. The function is real analytic for
$h>t/\pi$ and is therefore constant throughout
$(t/\pi,\infty)$. As $h\to\infty$,
\[
Q_h(t)
=
1+\frac{4A}{t^2}
+\frac{2\delta}{h}
+O(h^{-2}),
\]
so $\delta=0$. If $B\ne0$, then $Q_h(t)$ is unbounded as
$h\downarrow t/\pi$, a contradiction. Thus $B=0$. The two endpoint
limits are now
\[
\lim_{h\to\infty}Q_h(t)
=
1+\frac{4A}{t^2},
\quad
\lim_{h\downarrow t/\pi}Q_h(t)
=
1+\frac{\pi^2A}{t^2}.
\]
Since $\pi^2\ne4$, constancy implies $A=0$. Therefore
$A=B=\delta=0$. Conversely, these conditions give
$Q_h(t)\equiv1$, which proves the assertion.
\end{proof}

\begin{remark}
The intermediate values of $h$ in
Proposition~\ref{prop:sharp-potential} have no polynomial
interpretation. For fixed $\sigma$, the endpoint values
\[
h_n=n+\sigma,
\quad
h_{n+1}=n+1+\sigma
\]
correspond to degrees $n$ and $n+1$. The continuous interpolation
replaces a direct calculation of
$Q_{h_{n+1}}-Q_{h_n}$ by the exact
derivative~\eqref{eq:Q-derivative}.
Proposition~\ref{prop:sharp-potential} shows that the thresholds in
the two main theorems are, respectively, the largest and the smallest
values for which the corresponding monotonicity of the interpolating
potentials holds.
\end{remark}

\section{Increasing affinely scaled zeros}
\label{sec:increasing}

We now turn the potential comparison into a comparison of the zeros
of two consecutive scaled solutions.

\begin{proof}[Proof of Theorem~\ref{thm:increasing}]
For $n\geq1$, put
\[
h_n\coloneqq n+\sigma.
\]
Since $\sigma>-1$, one has $h_n>0$. In~\eqref{eq:scaled-equation}, the coefficient corresponding to
$Z_{n,h_n}$ is $Q_{h_n}$, because
\[
\frac{n+\rho}{h_n}
=
1+\frac{\rho-\sigma}{h_n}.
\]

Proposition~\ref{prop:sharp-potential} shows that, for each fixed
$t\in(0,h_n\pi)$, the function $h\mapsto Q_h(t)$ is nonincreasing
on $[h_n,h_{n+1}]$. Unless
\begin{equation}
A=B=0,
\quad
\sigma=\rho,
\label{eq:exceptional-conditions}
\end{equation}
it is not constant on this interval. If its values at the two
endpoints were equal, monotonicity would force it to be constant
throughout the interval. Hence
\begin{equation}
Q_{h_n}(t)>Q_{h_{n+1}}(t),
\quad
0<t<h_n\pi.
\label{eq:Q-decrease}
\end{equation}

Let
\[
a_k\coloneqq h_n\theta_{n,k},
\quad
b_k\coloneqq h_{n+1}\theta_{n+1,k}.
\]
If $b_k\geq h_n\pi$, then
\[
a_k<h_n\pi\leq b_k,
\]
and the desired inequality is immediate. If $b_k<h_n\pi$, apply
Lemma~\ref{lem:sturm} to $Z_{n,h_n}$ and
$Z_{n+1,h_{n+1}}$. The potential inequality
is~\eqref{eq:Q-decrease}, the Wronskian condition follows from
Lemma~\ref{lem:wronskian}, and positivity near $0$ follows
from~\eqref{eq:endpoint-expansion}. We obtain
\[
a_k<b_k,
\]
which is~\eqref{eq:increasing-intro}.

Finally,~\eqref{eq:exceptional-conditions} is equivalent to
\[
|\alpha|=|\beta|=\frac12,
\quad
\sigma=\rho.
\]
In that case, $Q_h\equiv1$. The endpoint behaviour
in~\eqref{eq:endpoint-expansion} identifies the corresponding solution
of $u''+u=0$. If $\alpha=-1/2$, then
\[
Z_{n,h_n}(0)=C_{n,h_n},
\quad
Z_{n,h_n}'(0)=0,
\]
and hence
\[
Z_{n,h_n}(t)=C_{n,h_n}\cos t.
\]
If $\alpha=1/2$, then
\[
Z_{n,h_n}(0)=0,
\quad
Z_{n,h_n}'(0)=C_{n,h_n},
\]
and hence
\[
Z_{n,h_n}(t)=C_{n,h_n}\sin t.
\]
In either case, the scaled zeros are independent of the degree, and
equality follows.
\end{proof}

\begin{lemma}\label{lem:explicit-threshold}
If $\alpha,\beta>-1$ and $|\beta|\leq1/2$, then~\eqref{eq:sigma-comparison} holds.
\end{lemma}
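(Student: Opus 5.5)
The plan is to split \eqref{eq:sigma-comparison} into its two inequalities. Throughout I use $|\beta|\leq1/2$ only through $B\geq0$, together with $C=A+3B=(1-\alpha^2-3\beta^2)/4$. In this notation
\[
\sigma_{\mathrm{alg}}=\rho-\frac{\max\{0,-C\}}{1+\rho},
\]
and $1+\rho>1/2>0$.

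\emph{First inequality, $\sigma_\uparrow\geq\sigma_{\mathrm{alg}}$.} Since $1+\rho>0$, this reduces to $(-m_{\alpha,\beta})_+\leq\max\{0,-C\}$, and for that it suffices to show $m_{\alpha,\beta}\geq\min\{0,C\}$.
\begin{itemize}
\item I would use the identity $\Phi_{\alpha,\beta}=CR+B(T-3R)$ from the proof of Proposition~\ref{prop:spectral-sign}.
\item By Lemma~\ref{lem:RT}, $T-3R>0$, so $B\geq0$ gives $\Phi_{\alpha,\beta}\geq CR$ on $(0,\pi/2)$.
\item If $C\geq0$, then $CR\geq0$ because $R>0$.
\item If $C<0$, then $CR>C$ because $0<R<1$ by \eqref{eq:R-bounds}.
\end{itemize}
In either case $\Phi_{\alpha,\beta}(x)\geq\min\{0,C\}$ for every $x$. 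Taking the infimum gives the claim, and the first inequality follows.

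\emph{Second inequality, $\sigma_{\mathrm{alg}}>-1/2$.} If $\max\{0,-C\}=0$, the claim is $\rho>-1/2$, which is just $\alpha+\beta>-2$. Otherwise, multiplying by $1+\rho>0$, I need
\[
\Bigl(\rho+\tfrac12\Bigr)(1+\rho)>\frac{\alpha^2+3\beta^2-1}{4}.
\]
Writing $s=\alpha+\beta$, the left side equals $(s+2)(s+3)/4$. After multiplying by $4$ and expanding, the difference of the two sides is
\[
D(\alpha,\beta)=2\beta(\alpha-\beta)+5\alpha+5\beta+7.
\]

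The main point is to show $D>0$ on the region. For fixed $\beta\in[-1/2,1/2]$, $D$ is affine in $\alpha$ with slope $2\beta+5>0$. Hence $D(\alpha,\beta)>D(-1,\beta)$ for every $\alpha>-1$. At the boundary,
\[
D(-1,\beta)=-2\beta^2+3\beta+2=(2-\beta)(2\beta+1)\geq0
\]
for $-1/2\leq\beta\leq1/2$. Strictness comes from $\alpha>-1$, even at the corner $\beta=-1/2$ where $D(-1,\beta)$ vanishes. This gives $\sigma_{\mathrm{alg}}>-1/2$ and completes the plan.
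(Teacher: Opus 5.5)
Your proposal is correct and follows the paper's proof essentially step for step. For the first inequality, both arguments use $\Phi_{\alpha,\beta}\geq CR$ and split on the sign of $C$. For the second, your decomposition $D(\alpha,\beta)=D(-1,\beta)+(2\beta+5)(\alpha+1)$ is exactly the paper's identity $(2\beta+5)(\alpha+1)+2(2-\beta)\bigl(\beta+\tfrac12\bigr)$.
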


\begin{proof}
Put
\[
C\coloneqq A+3B
=
\frac{1-\alpha^2-3\beta^2}{4}.
\]
Since $B\geq0$, Lemma~\ref{lem:RT} gives
\begin{equation}
\Phi_{\alpha,\beta}(x)
=
C R(x)+B\bigl(T(x)-3R(x)\bigr)
\geq
C R(x).
\label{eq:profile-lower}
\end{equation}
If $C\geq0$, then $m_{\alpha,\beta}\geq0$, and consequently
\[
\sigma_\uparrow(\alpha,\beta)
=
\rho
=
\sigma_{\mathrm{alg}}(\alpha,\beta).
\]
If $C<0$, the inequality $R(x)<1$ shows
from~\eqref{eq:profile-lower} that $\Phi_{\alpha,\beta}(x)>C$. Hence
$m_{\alpha,\beta}\geq C$, and therefore
\[
\sigma_\uparrow(\alpha,\beta)
\geq
\rho+\frac{C}{1+\rho}
=
\sigma_{\mathrm{alg}}(\alpha,\beta).
\]

It remains to prove the second inequality in~\eqref{eq:sigma-comparison}. It is immediate when
$\alpha^2+3\beta^2\leq1$, because then
\[
\sigma_{\mathrm{alg}}=\rho>-\frac12.
\]
If $\alpha^2+3\beta^2>1$, direct expansion gives
\begin{align*}
&4(1+\rho)\left(\rho+\frac12\right)
-
\left(\alpha^2+3\beta^2-1\right)
\\[7pt]
&\quad
=
(2\beta+5)(\alpha+1)
+
2(2-\beta)\left(\beta+\frac12\right)
>0.
\end{align*}
Division by $4(1+\rho)>0$ completes the proof.
\end{proof}

\begin{corollary}\label{cor:explicit-increasing}
Let $\alpha,\beta>-1$ and $|\beta|\leq1/2$. Then~\eqref{eq:increasing-intro} holds whenever
\[
-1<\sigma\leq
\sigma_{\mathrm{alg}}(\alpha,\beta),
\]
with precisely the equality cases stated in
Theorem~\ref{thm:increasing}.
\end{corollary}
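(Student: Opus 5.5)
This corollary follows directly from combining Theorem~\ref{thm:increasing} with Lemma~\ref{lem:explicit-threshold}. The plan is to check that every $\sigma$ in the stated interval is admissible for Theorem~\ref{thm:increasing}, and then to observe that the equality analysis carries over unchanged.

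First, fix $\alpha,\beta>-1$ with $|\beta|\leq1/2$, which is the standing hypothesis of both Theorem~\ref{thm:increasing} and Lemma~\ref{lem:explicit-threshold}. By Lemma~\ref{lem:explicit-threshold}, inequality~\eqref{eq:sigma-comparison} holds. In particular $\sigma_{\mathrm{alg}}(\alpha,\beta)\leq\sigma_\uparrow(\alpha,\beta)$, and the interval $(-1,\sigma_{\mathrm{alg}}(\alpha,\beta)]$ is nonempty because $\sigma_{\mathrm{alg}}>-1/2>-1$. Hence any $\sigma$ with $-1<\sigma\leq\sigma_{\mathrm{alg}}(\alpha,\beta)$ also satisfies $-1<\sigma\leq\sigma_\uparrow(\alpha,\beta)$. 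Theorem~\ref{thm:increasing} then yields~\eqref{eq:increasing-intro} for every $n\geq1$ and $1\leq k\leq n$.

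For the equality cases, Theorem~\ref{thm:increasing} asserts that, for any admissible $\sigma\leq\sigma_\uparrow$, equality occurs for some $n,k$ if and only if $|\alpha|=|\beta|=1/2$ and $\sigma=\rho$, and in that case it occurs for all $n,k$. Since our $\sigma$ range is a subset of the theorem's range, the "only if" direction transfers immediately. For the "if" direction we must check that the exceptional value $\sigma=\rho$ actually lies in the smaller interval when $|\alpha|=|\beta|=1/2$. There $\alpha^2+3\beta^2-1=0$, so the maximum in~\eqref{eq:sigma-alg} is $0$ and $\sigma_{\mathrm{alg}}=\rho$. Moreover $\rho>-1/2>-1$, so $\sigma=\rho$ is included and the equality case is attained exactly as stated.

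No step presents a genuine obstacle. The only point needing care is this last verification that the equality parameter $\sigma=\rho$ is not lost when passing from $\sigma_\uparrow$ to the possibly smaller $\sigma_{\mathrm{alg}}$; it is settled by the computation $\sigma_{\mathrm{alg}}=\rho$ on the ellipse $\alpha^2+3\beta^2=1$.
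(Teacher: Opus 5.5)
Your proposal is correct and matches the paper's proof, which simply combines Lemma~\ref{lem:explicit-threshold} with Theorem~\ref{thm:increasing}. The paper leaves one detail implicit that you check explicitly: at the four points $|\alpha|=|\beta|=1/2$ one has $\sigma_{\mathrm{alg}}=\rho$, so the equality case survives the restriction to the smaller interval.
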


\begin{proof}
This follows from Lemma~\ref{lem:explicit-threshold} and
Theorem~\ref{thm:increasing}.
\end{proof}

We now deduce the first conjecture.

\begin{corollary}\label{cor:first-conjecture}
Conjecture~\ref{conj:first} holds throughout $\mathcal D_1$.
\end{corollary}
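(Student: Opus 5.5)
The plan is to apply Theorem~\ref{thm:increasing} with $\sigma=0$. That requires two facts: $0\leq\sigma_\uparrow(\alpha,\beta)$ on $\mathcal D_1$, and the equality case $|\alpha|=|\beta|=1/2$, $\sigma=\rho$ does not occur. Membership in $\mathcal D_1$ gives $|\beta|\leq1/2$, so the theorem applies. Since $\sigma=0>-1$, it suffices to show $\sigma_{\mathrm{alg}}(\alpha,\beta)\geq0$ and then invoke Lemma~\ref{lem:explicit-threshold} (equivalently Corollary~\ref{cor:explicit-increasing}). The text after the main theorems also promises the sharper fact $\sigma_{\mathrm{alg}}>0$ when $\alpha+\beta+1>0$, so I would prove that along the way.

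To show $\sigma_{\mathrm{alg}}\geq0$, I split cases. If $\alpha^2+3\beta^2\leq1$, then $\sigma_{\mathrm{alg}}=\rho=(\alpha+\beta+1)/2\geq0$, with strict inequality when $\alpha+\beta+1>0$. If $\alpha^2+3\beta^2>1$, I need
\[
4\rho(1+\rho)>\alpha^2+3\beta^2-1 .
\]
With $s=\alpha+\beta+1$, the left side is $s(s+2)=(\alpha+\beta+1)(\alpha+\beta+3)$. Expanding gives
\[
(\alpha+\beta+1)(\alpha+\beta+3)-(\alpha^2+3\beta^2-1)=2\alpha\beta-2\beta^2+4\alpha+4\beta+4 .
\]
This equals $2\alpha(\beta+2)+2(2+2\beta-\beta^2)$. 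I need it to be positive when $|\beta|\leq1/2$, $\alpha+\beta+1\geq0$, and $\alpha^2+3\beta^2>1$.

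The expression is linear and increasing in $\alpha$, since $\beta+2>0$. The plan is therefore to bound $\alpha$ below using the constraints. The condition $\alpha^2>1-3\beta^2\geq1/4$ together with $\alpha\geq-1-\beta\geq-3/2$ splits into two branches: either $\alpha>\sqrt{1-3\beta^2}\geq1/2$, or $\alpha<-\sqrt{1-3\beta^2}$. In the first branch positivity is clear, because $2+2\beta-\beta^2>0$ for $|\beta|\leq1/2$. In the second branch, $\alpha\geq-1-\beta$ gives the lower bound
\[
2(-1-\beta)(\beta+2)+2(2+2\beta-\beta^2)=-4\beta^2-2\beta .
\]
This is nonpositive for $\beta\geq0$, so the argument needs more care there. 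I expect this to be the main obstacle.

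My approach for the obstacle is to note that the second branch needs $-1-\beta<-\sqrt{1-3\beta^2}$, i.e.\ $1+\beta>\sqrt{1-3\beta^2}$. This forces $\beta>0$, or more precisely $(1+\beta)^2>1-3\beta^2$, i.e.\ $4\beta^2+2\beta>0$, i.e.\ $\beta>0$ or $\beta<-1/2$. I would then redo the bound using $\alpha>-1-\beta$ directly against the quadratic $(\alpha+\beta+1)(\alpha+\beta+3)-(\alpha^2+3\beta^2-1)$. Alternatively, I would write $\alpha=-1-\beta+s$ with $s\geq0$, express everything as a polynomial in $s,\beta$, and check nonnegativity on the feasible region, with equality only on $s=0$.

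On the boundary $\alpha+\beta+1=0$ I only need $\sigma_{\mathrm{alg}}\geq0$. If this calculation fails, the fallback is to compute directly that $\sigma_\uparrow=0$ there, as the paper asserts. This would come from $\rho=0$ and $m_{\alpha,\beta}\geq0$, i.e.\ from $C\geq0$ via Proposition~\ref{prop:spectral-sign}. Indeed, on that line $C=(1-(1+\beta)^2-3\beta^2)/4=-\beta(2+4\beta)/4\cdot$, and the sign needs checking: if $C<0$ then $\sigma_\uparrow<0$, which contradicts the figure. So I would verify it honestly there.

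Once $0\leq\sigma_\uparrow$ is established, Theorem~\ref{thm:increasing} gives $n\theta_{n,k}\leq(n+1)\theta_{n+1,k}$. Equality would require $|\alpha|=|\beta|=1/2$ and $\rho=0$, i.e.\ $(\alpha,\beta)=(-1/2,-1/2)$, which is excluded from $\mathcal D_1$. The inequality is therefore strict, which proves Conjecture~\ref{conj:first}.
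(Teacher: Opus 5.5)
Your overall plan matches the paper's: reduce to $\sigma_{\mathrm{alg}}(\alpha,\beta)\geq0$ on $\mathcal D_1$, split on the ellipse, and study the difference $2(\alpha\beta-\beta^2+2\alpha+2\beta+2)$, which is increasing in $\alpha$. The gap is in the branch $\alpha^2+3\beta^2>1$ with $\beta>0$. You correctly flag this as the obstacle, but you do not resolve it.

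The reduced claim you set out to prove is false as stated. You want positivity under $|\beta|\leq1/2$, $\alpha+\beta+1\geq0$, $\alpha^2+3\beta^2>1$. At $(\alpha,\beta)=(-7/5,1/2)$ all three conditions hold, yet the difference equals $-3/2$. So neither of your proposed remedies can work. Re-using $\alpha\geq-1-\beta$ only reproduces the bound $-2\beta(2\beta+1)<0$. After the substitution $\alpha=-1-\beta+s$, the polynomial at $s=0$ is exactly $-2\beta(2\beta+1)$, which is strictly negative for $\beta>0$. Hence ``nonnegativity with equality only on $s=0$'' is not what happens on that feasible region.

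The missing ingredient is the standing hypothesis $\alpha>-1$, which you never use in this computation. For $\beta\geq0$ it is the binding lower bound, since $-1\geq-1-\beta$. At $\alpha=-1$ the difference equals $2\beta(1-\beta)\geq0$. Monotonicity in $\alpha$ then gives strict positivity for all $\alpha>-1$ and $0\leq\beta\leq1/2$. For $-1/2\leq\beta\leq0$, your bound at $\alpha=-1-\beta$, namely $-2\beta(2\beta+1)\geq0$, is the right one. This is exactly the paper's case split.

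The same hypothesis also settles your unresolved boundary fallback. On $\alpha+\beta+1=0$ it forces $-1/2\leq\beta<0$, so $\alpha^2+3\beta^2=1+2\beta(1+2\beta)\leq1$ and $\sigma_{\mathrm{alg}}=\rho=0$ directly. No sign check of $C$ is needed there.

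Your first branch ($\alpha>\sqrt{1-3\beta^2}$) and your treatment of the equality case $(\alpha,\beta)=(-1/2,-1/2)$ are fine.
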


\begin{proof}
It is enough to show that
\begin{equation}
\sigma_{\mathrm{alg}}(\alpha,\beta)\geq0,
\quad
(\alpha,\beta)\in\mathcal D_1.
\label{eq:sigma-nonnegative}
\end{equation}
When $\alpha^2+3\beta^2\leq1$, this follows immediately from
$\sigma_{\mathrm{alg}}=\rho\geq0$.

Suppose that $\alpha^2+3\beta^2>1$. Then~\eqref{eq:sigma-nonnegative} is equivalent to
\begin{equation}
4\rho(1+\rho)
\geq
\alpha^2+3\beta^2-1.
\label{eq:S-algebra}
\end{equation}
The difference between the two sides is
\begin{equation}
2\bigl(
\alpha\beta-\beta^2+2\alpha+2\beta+2
\bigr).
\label{eq:S-difference}
\end{equation}
If $0\leq\beta\leq1/2$, the expression in parentheses is
increasing in $\alpha$, since its derivative with respect to
$\alpha$ is $\beta+2>0$. Its limiting value as
$\alpha\downarrow-1$ is
\[
\beta(1-\beta)\geq0.
\]
If $-1/2\leq\beta\leq0$, the expression is again increasing in
$\alpha$. The condition $\rho\geq0$ gives
$\alpha\geq-\beta-1$, and at this boundary value the difference in~\eqref{eq:S-difference} is
\[
-2\beta(2\beta+1)\geq0.
\]
Thus~\eqref{eq:S-algebra} holds in both cases.

Corollary~\ref{cor:explicit-increasing} may therefore be applied with
$\sigma=0$. Its only equality case with $\sigma=\rho=0$ is
\[
(\alpha,\beta)
=
\left(-\frac12,-\frac12\right),
\]
which is excluded from $\mathcal D_1$.
\end{proof}

\begin{remark}\label{rem:stronger-first}
For $\sigma\geq0$, the function
\[
\sigma\longmapsto
\frac{n+\sigma}{n+1+\sigma}
\]
is strictly increasing. Hence, whenever
$\sigma_\uparrow(\alpha,\beta)>0$, the choice
$\sigma=\sigma_\uparrow(\alpha,\beta)$ in
Theorem~\ref{thm:increasing} gives a strictly stronger estimate than
Conjecture~\ref{conj:first}.
\end{remark}

\section{Decreasing affinely scaled zeros}
\label{sec:decreasing}

The decreasing case follows the same scheme, with the ordering of the
potentials reversed. We include the argument to make the
strictness and the treatment of the singular endpoint explicit.

\begin{proof}[Proof of Theorem~\ref{thm:decreasing}]
Put $h_n\coloneqq n+\sigma$.
Since
\[
\sigma\geq\sigma_\downarrow(\alpha,\beta)\geq\rho,
\]
and $\rho>-1/2$, one has
\[
h_n\geq n+\rho>\frac12.
\]
Thus all the scales under consideration are positive. As in the proof
of Theorem~\ref{thm:increasing}, the potential corresponding to
$Z_{n,h_n}$ is $Q_{h_n}$, because
\[
\frac{n+\rho}{h_n}
=
1+\frac{\rho-\sigma}{h_n}.
\]

Fix $t\in(0,h_n\pi)$. For every
$h\in[h_n,h_{n+1}]$, one has $t<h\pi$, so that $Q_h(t)$ is defined
throughout this interval. Proposition~\ref{prop:sharp-potential}
shows that
\[
h\longmapsto Q_h(t)
\]
is nondecreasing on $[h_n,h_{n+1}]$. Unless
\begin{equation}
A=B=0,
\quad
\sigma=\rho,
\label{eq:decreasing-exceptional}
\end{equation}
the final assertion of that proposition shows that this function
cannot be constant on a nondegenerate interval. If its values at
$h_n$ and $h_{n+1}$ were equal, monotonicity would force it to be
constant throughout $[h_n,h_{n+1}]$. Consequently,
\begin{equation}
Q_{h_{n+1}}(t)>Q_{h_n}(t),
\quad
0<t<h_n\pi.
\label{eq:Q-increase}
\end{equation}

Set
\[
a_k\coloneqq h_{n+1}\theta_{n+1,k}^{(\alpha,\beta)},
\quad
b_k\coloneqq h_n\theta_{n,k}^{(\alpha,\beta)}.
\]
Since
\[
0<b_k<h_n\pi,
\]
Lemma~\ref{lem:sturm} may be applied on $(0,h_n\pi)$ with
\[
u_1=Z_{n+1,h_{n+1}},
\quad
q_1=Q_{h_{n+1}},
\quad
u_2=Z_{n,h_n},
\quad
q_2=Q_{h_n}.
\]
The strict potential inequality is~\eqref{eq:Q-increase}, the
Wronskian condition follows from Lemma~\ref{lem:wronskian},
and~\eqref{eq:endpoint-expansion} shows that both solutions are positive
in a sufficiently small right neighbourhood of the origin.
Therefore,
\[
a_k<b_k,
\]
or equivalently,
\[
h_{n+1}\theta_{n+1,k}^{(\alpha,\beta)}
<
h_n\theta_{n,k}^{(\alpha,\beta)}.
\]
This proves~\eqref{eq:decreasing-intro} outside~\eqref{eq:decreasing-exceptional}.

The conditions in~\eqref{eq:decreasing-exceptional} are equivalent to
\[
|\alpha|=|\beta|=\frac12,
\quad
\sigma=\rho.
\]
In this case $Q_h\equiv1$. As in the proof of
Theorem~\ref{thm:increasing}, the endpoint expansion determines the
corresponding solution of $u''+u=0$: it is a positive multiple of
$\cos t$ when $\alpha=-1/2$ and a positive multiple of $\sin t$ when
$\alpha=1/2$. Its positive zeros are therefore independent of the
degree, and equality holds for every admissible $n$ and $k$.
Conversely, the strict comparison above excludes equality in every
other case.
\end{proof}

\begin{corollary}\label{cor:second-conjecture}
For every $(\alpha,\beta)\in\mathcal D_\downarrow$,
$n\geq1$, and $1\leq k\leq n$,
\[
\gamma_{n+1}\theta_{n+1,k}^{(\alpha,\beta)}
\leq
\gamma_n\theta_{n,k}^{(\alpha,\beta)}.
\]
Equality occurs for some $n$ and $k$ if and only if
\[
|\alpha|=|\beta|=\frac12;
\]
at these four parameter values it occurs for every $n$ and $k$.
In particular, Conjecture~\ref{conj:second} holds.
\end{corollary}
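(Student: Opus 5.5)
The plan is to apply Theorem~\ref{thm:decreasing} with $\sigma=\rho$, since then $h_n=n+\sigma=\gamma_n$ and \eqref{eq:decreasing-intro} is exactly the asserted inequality. The only task is to check the hypotheses of that theorem on $\mathcal D_\downarrow$: we need $|\beta|\geq1/2$ and $\rho\geq\sigma_\downarrow(\alpha,\beta)$.

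The first condition is part of the definition of $\mathcal D_\downarrow$. For the second, write the region in terms of \eqref{eq:AB}. The condition $|\beta|\geq1/2$ is equivalent to $B\leq0$. The condition $\alpha^2+3\beta^2\geq1$ is equivalent to $C=A+3B=(1-\alpha^2-3\beta^2)/4\leq0$. Proposition~\ref{prop:spectral-sign} then gives $\Phi_{\alpha,\beta}\leq0$ on $(0,\pi/2)$. Hence $M_{\alpha,\beta}\leq0$, so $(M_{\alpha,\beta})_+=0$. By \eqref{eq:sigma-down} this yields $\sigma_\downarrow(\alpha,\beta)=\rho$, and the choice $\sigma=\rho$ is admissible in Theorem~\ref{thm:decreasing}.

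The equality statement follows directly from the equality clause of Theorem~\ref{thm:decreasing}. With $\sigma=\rho$ that clause reduces to $|\alpha|=|\beta|=1/2$, in which case equality holds for every $n$ and $k$. All four such points lie in $\mathcal D_\downarrow$, since $1/4+3/4=1$.

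For Conjecture~\ref{conj:second}, it suffices to show $\mathcal D_2\subset\mathcal D_\downarrow$. If $|\alpha|,|\beta|\geq1/2$, then $\alpha^2+3\beta^2\geq1/4+3/4=1$, so every point of $\mathcal D_2$ lies in $\mathcal D_\downarrow$. The four equality points are exactly the set $\{-1/2,1/2\}^2$ removed from $\mathcal D_2$, so the inequality is strict throughout $\mathcal D_2$, which is Conjecture~\ref{conj:second}.

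I do not expect a genuine obstacle, since all the analytic work is already done in Proposition~\ref{prop:spectral-sign} and Theorem~\ref{thm:decreasing}. The one point to get right is the translation of $\mathcal D_\downarrow$ into the sign conditions $B\leq0$ and $C\leq0$.
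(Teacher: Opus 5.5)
Your proposal is correct and follows the paper's proof essentially verbatim. You translate $\mathcal D_\downarrow$ into $B\le0$, $C\le0$, use Proposition~\ref{prop:spectral-sign} to get $M_{\alpha,\beta}\le0$ and hence $\sigma_\downarrow(\alpha,\beta)=\rho$, apply Theorem~\ref{thm:decreasing} with $\sigma=\rho$, and conclude via $\mathcal D_2\subset\mathcal D_\downarrow$ with the four equality points excluded; your explicit checks that $\alpha^2+3\beta^2\ge\tfrac14+\tfrac34$ on $\mathcal D_2$ and that the four points lie in $\mathcal D_\downarrow$ spell out details the paper leaves implicit.
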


\begin{proof}
For $(\alpha,\beta)\in\mathcal D_\downarrow$, put
\[
C\coloneqq A+3B
=
\frac{1-\alpha^2-3\beta^2}{4}.
\]
The defining conditions of $\mathcal D_\downarrow$ give
\[
B\leq0,
\quad
C\leq0.
\]
Proposition~\ref{prop:spectral-sign} therefore gives
$\Phi_{\alpha,\beta}(x)\leq0$ throughout $(0,\pi/2)$.
It follows that $M_{\alpha,\beta}\leq0$ and therefore
\[
\sigma_\downarrow(\alpha,\beta)=\rho.
\]
The conclusion, including the equality statement, now follows from
Theorem~\ref{thm:decreasing} with $\sigma=\rho$.

Finally,
\[
\mathcal D_2\subsetneq\mathcal D_\downarrow,
\]
and the four equality points are excluded from
$\mathcal D_2$ by its definition. The inequality is therefore strict
throughout the domain of Conjecture~\ref{conj:second}.
\end{proof}

\begin{corollary}\label{cor:reverse}
For every $(\alpha,\beta)\in\mathcal D_\uparrow$,
$n\geq1$, and $1\leq k\leq n$,
\[
\gamma_n\theta_{n,k}^{(\alpha,\beta)}
\leq
\gamma_{n+1}\theta_{n+1,k}^{(\alpha,\beta)}.
\]
Equality occurs for some $n$ and $k$ if and only if
\[
|\alpha|=|\beta|=\frac12;
\]
at these four parameter values it occurs for every $n$ and $k$.
\end{corollary}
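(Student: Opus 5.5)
The plan mirrors the proof of Corollary~\ref{cor:second-conjecture}, with Theorem~\ref{thm:increasing} in place of Theorem~\ref{thm:decreasing}. Fix $(\alpha,\beta)\in\mathcal D_\uparrow$ and put $C\coloneqq A+3B=(1-\alpha^2-3\beta^2)/4$. The two defining conditions of $\mathcal D_\uparrow$, namely $|\beta|\leq1/2$ and $\alpha^2+3\beta^2\leq1$, say exactly that $B\geq0$ and $C\geq0$. Proposition~\ref{prop:spectral-sign} then gives $\Phi_{\alpha,\beta}(x)\geq0$ on $(0,\pi/2)$, so $m_{\alpha,\beta}\geq0$ and $(-m_{\alpha,\beta})_+=0$. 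By \eqref{eq:sigma-up}, this yields $\sigma_\uparrow(\alpha,\beta)=\rho$.

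Next we check that $\sigma=\rho$ is an admissible shift, i.e.\ $\rho>-1$. This is clear, since $\rho=(\alpha+\beta+1)/2>-1/2$ for $\alpha,\beta>-1$. Theorem~\ref{thm:increasing} therefore applies with $\sigma=\rho$, which is permitted because $|\beta|\leq1/2$ holds on $\mathcal D_\uparrow$. With $n+\rho=\gamma_n$, inequality \eqref{eq:increasing-intro} reads $\gamma_n\theta_{n,k}\leq\gamma_{n+1}\theta_{n+1,k}$ for all $n\geq1$ and $1\leq k\leq n$.

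The equality statement comes directly from the equality clause of Theorem~\ref{thm:increasing}. Since $\sigma=\rho$ is fixed, equality for some $n,k$ happens if and only if $|\alpha|=|\beta|=1/2$, and in that case it holds for every $n$ and $k$. It remains to confirm that these four points lie in $\mathcal D_\uparrow$, so that the ``if'' direction is not vacuous. At these points $|\beta|=1/2$ and $\alpha^2+3\beta^2=1/4+3/4=1$, so both defining inequalities hold.

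No real obstacle is expected, since everything reduces to earlier results. The one point needing care is matching the defining conditions of $\mathcal D_\uparrow$ with the sign conditions $B\geq0$ and $C\geq0$, together with the admissibility check $\sigma>-1$.
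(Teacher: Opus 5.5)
Your proposal is correct and follows the paper's proof essentially step for step: you read off $B\geq0$ and $C\geq0$ from the definition of $\mathcal D_\uparrow$, use Proposition~\ref{prop:spectral-sign} to get $m_{\alpha,\beta}\geq0$ and hence $\sigma_\uparrow(\alpha,\beta)=\rho$, and then apply Theorem~\ref{thm:increasing} with $\sigma=\rho$, equality clause included. You also check that $\rho>-1$ and that the four points $|\alpha|=|\beta|=1/2$ lie in $\mathcal D_\uparrow$; the paper leaves these implicit, and they are harmless additions.
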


\begin{proof}
For $(\alpha,\beta)\in\mathcal D_\uparrow$, one has
\[
B\geq0, \quad A+3B
=
\frac{1-\alpha^2-3\beta^2}{4}
\geq0.
\]
Proposition~\ref{prop:spectral-sign} therefore gives
$\Phi_{\alpha,\beta}(x)\geq0$ throughout $(0,\pi/2)$.
Consequently, $m_{\alpha,\beta}\geq0$ and
$\sigma_\uparrow(\alpha,\beta)=\rho$.
The result follows from Theorem~\ref{thm:increasing} with
$\sigma=\rho$, including the asserted characterisation of equality.
\end{proof}

\section{Sharpness of the comparison regions}
\label{sec:spectral-sharpness}

Let $j_{\alpha,k}$ denote the $k$th positive zero of the Bessel
function $J_\alpha$. To make the analytic object used below explicit,
write
\[
\mathcal J_\alpha(z)
\coloneqq
\left(\frac z2\right)^{-\alpha}J_\alpha(z)
=
\sum_{r=0}^{\infty}
\frac{(-1)^r(z^2/4)^r}{r!\,\Gamma(r+\alpha+1)}.
\]
Thus $\mathcal J_\alpha$ is an entire even function, including when
$\alpha$ is not an integer, and its positive zeros are precisely the
numbers $j_{\alpha,k}$. The Mehler--Heine formula states that
\[
n^{-\alpha}
P_n^{(\alpha,\beta)}\!\left(\cos\frac zn\right)
\longrightarrow
\mathcal J_\alpha(z)
\]
uniformly on compact subsets of the complex plane; see
Szegő~\cite[Theorem~8.1.1]{Szego}. Hurwitz's theorem therefore gives,
for every fixed $k$,
\begin{equation}
\lim_{n\to\infty}
n\theta_{n,k}^{(\alpha,\beta)}
=
j_{\alpha,k};
\label{eq:zero-limit}
\end{equation}
Consequently, for every fixed real $\sigma$,
\begin{equation}
\lim_{n\to\infty}
(n+\sigma)\theta_{n,k}^{(\alpha,\beta)}
=
j_{\alpha,k}.
\label{eq:affine-zero-limit}
\end{equation}

We first record the Bessel-zero spacing needed below.

\begin{lemma}\label{lem:bessel-spacing}
Let $\nu>-1$. Then
\[
j_{\nu,2}-j_{\nu,1}
\begin{cases}
<\pi,&|\nu|<1/2,\\[7pt]
=\pi,&|\nu|=1/2,\\[7pt]
>\pi,&|\nu|>1/2.
\end{cases}
\]
\end{lemma}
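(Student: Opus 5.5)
The plan is to put the Bessel equation in Liouville normal form and apply Sturm comparison, in the form of Lemma~\ref{lem:sturm}, to the function $u(t)=\sqrt t\,J_\nu(t)$, which satisfies
\[
u''+\Bigl(1+\frac{1-4\nu^2}{4t^2}\Bigr)u=0,\quad t>0.
\]
Its positive zeros are exactly $j_{\nu,k}$, and they are regular points of the equation. The case $|\nu|=1/2$ is immediate. Here $J_{1/2}$ is proportional to $t^{-1/2}\sin t$ and $J_{-1/2}$ to $t^{-1/2}\cos t$, so $u$ is a multiple of $\sin t$ or $\cos t$. Consecutive zeros are then exactly $\pi$ apart.

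For $|\nu|\ne1/2$, I compare $u$ with a translated sine on the interval $(j_{\nu,1},j_{\nu,2})$. Set $v(t)=\sin(t-j_{\nu,1})$, a solution of $v''+v=0$ with zeros $j_{\nu,1}$ and $j_{\nu,1}+\pi$. The potential is $q(t)=1+(1-4\nu^2)/(4t^2)$.
\begin{itemize}
\item If $|\nu|<1/2$, then $q>1$ throughout $(0,\infty)$.
\item If $|\nu|>1/2$, then $q<1$ throughout $(0,\infty)$.
\end{itemize}
Both endpoints $j_{\nu,1}$ and $j_{\nu,1}+\pi$ are regular points, so the endpoint Wronskian condition in Lemma~\ref{lem:sturm} holds trivially: both $u$ and $v$ vanish at $j_{\nu,1}$. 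After translating the variable by $j_{\nu,1}$ and flipping signs so that both functions are positive just to the right of $j_{\nu,1}$, the lemma applies.

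\textbf{Case $|\nu|<1/2$.} Take $u_1=u$, $q_1=q$ and $u_2=v$, $q_2=1$. The lemma gives a zero of $u$ in $(j_{\nu,1},j_{\nu,1}+\pi)$, so $j_{\nu,2}<j_{\nu,1}+\pi$.

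\textbf{Case $|\nu|>1/2$.} Reverse the roles: take $u_1=v$ and $u_2=u$, applying the lemma on $(0,j_{\nu,2}-j_{\nu,1})$ after the shift. It gives a zero of $v$ strictly inside $(j_{\nu,1},j_{\nu,2})$, so $j_{\nu,1}+\pi<j_{\nu,2}$.

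Strictness in both cases comes from the strict potential inequality, exactly as in the proof of Lemma~\ref{lem:sturm}. The monotone Wronskian argument there only needs the strict inequality on the interval, and it holds on all of $(0,\infty)$.

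The main obstacle is bookkeeping rather than analysis. Lemma~\ref{lem:sturm} is stated with a left endpoint at $0$ and with both functions positive near it. I will state explicitly that after the translation $t\mapsto t-j_{\nu,1}$ the left endpoint is a common regular zero, where $W=u_1'u_2-u_1u_2'$ vanishes. I will also note that replacing $u$ by $-u$ if needed makes both functions positive near that endpoint, because both zeros are simple. The range $-1<\nu<-1/2$ needs no special care, since only the zeros $j_{\nu,1}<j_{\nu,2}$ on $(0,\infty)$ enter and the singularity at $t=0$ is never used.
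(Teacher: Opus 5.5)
Your proposal is correct and is essentially the paper's proof: you translate by $j_{\nu,1}$, compare $\sqrt{t}\,J_\nu(t)$ with $\sin t$ via Lemma~\ref{lem:sturm}, assign the roles of $u_1,u_2$ according to the sign of $1/4-\nu^2$, and reduce the case $|\nu|=1/2$ to $u''+u=0$. One bookkeeping fix is needed: Lemma~\ref{lem:sturm} requires $b_1<L$ strictly, so in the case $|\nu|>1/2$ you should take $L>\max\{j_{\nu,2}-j_{\nu,1},\pi\}$, as the paper does, rather than working on $(0,j_{\nu,2}-j_{\nu,1})$; this costs nothing, because the coefficient inequality holds on all of $(0,\infty)$.
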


\begin{proof}
Put
\[
u(x)\coloneqq\sqrt{x}J_\nu(x).
\]
The Bessel equation gives
\[
u''(x)
+
\left(
1+\frac{1/4-\nu^2}{x^2}
\right)u(x)
=0,
\quad x>0.
\]
Let
\[
a\coloneqq j_{\nu,1},
\quad
d\coloneqq j_{\nu,2}-j_{\nu,1}.
\]
After changing the sign of $u$ if necessary, the function
\[
U(t)\coloneqq u(a+t)
\]
is positive in a right neighbourhood of $0$, and its first positive
zero is $d$. Compare it with $V(t)=\sin t$. Both functions vanish at
$t=0$, so the boundary Wronskian condition in Lemma~\ref{lem:sturm}
is immediate. Fix $L>\max\{d,\pi\}$.

If $|\nu|<1/2$, then
\[
1+\frac{1/4-\nu^2}{(a+t)^2}>1,
\quad t>0.
\]
Lemma~\ref{lem:sturm}, with $U$ as the solution corresponding to the
larger coefficient, gives $d<\pi$. If $|\nu|>1/2$, the coefficient
inequality is reversed, and the same lemma, with the roles of $U$ and
$V$ interchanged, gives $\pi<d$. Finally, if $|\nu|=1/2$, the
equation for $u$ is $u''+u=0$, so consecutive positive zeros are
separated by $\pi$.
\end{proof}

\begin{proof}[Proof of Corollary~\ref{cor:increasing-affine-classification}]
Suppose first that $|\beta|\leq1/2$. By~\eqref{eq:sigma-comparison},
$\sigma_\uparrow(\alpha,\beta)>-1/2$, so the interval
\[
-1<\sigma\leq\sigma_\uparrow(\alpha,\beta)
\]
is nonempty. Theorem~\ref{thm:increasing} therefore gives the required
comparison for every $\sigma$ in this interval.

Conversely, suppose that $|\beta|>1/2$ and fix $\sigma>-1$. Put
\[
D_n
\coloneqq
(n+1+\sigma)\theta_{n+1,n}^{(\alpha,\beta)}
-
(n+\sigma)\theta_{n,n}^{(\alpha,\beta)}.
\]
The Jacobi symmetry
\[
P_m^{(\alpha,\beta)}(-x)
=
(-1)^mP_m^{(\beta,\alpha)}(x)
\]
gives
\[
D_n
=
\pi
+
(n+\sigma)\theta_{n,1}^{(\beta,\alpha)}
-
(n+1+\sigma)\theta_{n+1,2}^{(\beta,\alpha)}.
\]
By~\eqref{eq:affine-zero-limit},
\[
\lim_{n\to\infty}D_n
=
\pi+j_{\beta,1}-j_{\beta,2}.
\]
Lemma~\ref{lem:bessel-spacing} shows that this limit is negative.
Hence $D_n<0$ for all sufficiently large $n$, so the increasing
comparison fails for the zero index $k=n$. Since $\sigma>-1$ was
arbitrary, no degree-independent affine shift can give the increasing
ordering for all $n$ and $k$.
\end{proof}

\begin{proof}[Proof of Proposition~\ref{prop:spectral-failure}]
Put
\[
C\coloneqq A+3B,
\quad
\Delta_{n,k}
\coloneqq
\gamma_{n+1}\theta_{n+1,k}^{(\alpha,\beta)}
-
\gamma_n\theta_{n,k}^{(\alpha,\beta)}.
\]
We first determine the sign of $\Delta_{n,1}$. In
Lemma~\ref{lem:potential-derivative}, take $\sigma=\rho$, so that
$\delta=0$. Then
\begin{equation}
\frac{\partial Q_h(t)}{\partial h}
=
-\frac{2}{h^3}
\Phi_{\alpha,\beta}\left(\frac{t}{2h}\right).
\label{eq:spectral-Q-derivative}
\end{equation}
Moreover, by~\eqref{eq:RT-zero},
\[
\lim_{x\downarrow0}\Phi_{\alpha,\beta}(x)=\frac{C}{3}.
\]

Suppose first that $C>0$. Choose $\varepsilon>0$ such that
\[
\Phi_{\alpha,\beta}(x)>0,
\quad 0<x<\varepsilon,
\]
and fix $T>j_{\alpha,1}$. By~\eqref{eq:affine-zero-limit}, for all
sufficiently large $n$ the first positive zeros of
$Z_{n,\gamma_n}$ and $Z_{n+1,\gamma_{n+1}}$ both lie in $(0,T)$.
Increasing $n$ if necessary, we may also assume that
\[
T<\gamma_n\pi,
\quad
\frac{T}{2\gamma_n}<\varepsilon.
\]
If $\gamma_n\leq h\leq\gamma_{n+1}$ and $0<t<T$, then
\[
0<\frac{t}{2h}<\varepsilon.
\]
Hence~\eqref{eq:spectral-Q-derivative} gives
\[
Q_{\gamma_n}(t)>Q_{\gamma_{n+1}}(t),
\quad 0<t<T.
\]
The Wronskian condition follows from Lemma~\ref{lem:wronskian}, and
positivity near $0$ follows from~\eqref{eq:endpoint-expansion}.
Lemma~\ref{lem:sturm} therefore gives
\[
\gamma_n\theta_{n,1}^{(\alpha,\beta)}
<
\gamma_{n+1}\theta_{n+1,1}^{(\alpha,\beta)},
\]
so $\Delta_{n,1}>0$. If $C<0$, the same argument with the potential
inequality reversed gives $\Delta_{n,1}<0$.

We next consider $\Delta_{n,n}$. The Jacobi symmetry
\[
P_m^{(\alpha,\beta)}(-x)
=
(-1)^mP_m^{(\beta,\alpha)}(x)
\]
gives
\[
\theta_{m,k}^{(\alpha,\beta)}
=
\pi-\theta_{m,m+1-k}^{(\beta,\alpha)}.
\]
Consequently,
\[
\Delta_{n,n}
=
\pi
+
\gamma_n\theta_{n,1}^{(\beta,\alpha)}
-
\gamma_{n+1}\theta_{n+1,2}^{(\beta,\alpha)}.
\]
Since $\rho$ is unchanged when $\alpha$ and $\beta$ are
interchanged,~\eqref{eq:affine-zero-limit} yields
\[
\lim_{n\to\infty}\Delta_{n,n}
=
\pi+j_{\beta,1}-j_{\beta,2}.
\]
Lemma~\ref{lem:bessel-spacing} now shows that
$\Delta_{n,n}$ has the same sign as $B$ for all sufficiently large
$n$. Since $BC<0$ on $\mathcal U$, the
integer $N$ may be chosen so that
\[
\Delta_{n,1}\Delta_{n,n}<0,
\quad n\geq N.
\]
This proves the proposition.
\end{proof}

\begin{proof}[Proof of Corollary~\ref{cor:spectral-classification}]
The sufficiency of the two conditions follows from
Corollaries~\ref{cor:reverse} and~\ref{cor:second-conjecture},
respectively. Suppose that the first inequality in the statement holds
for every $n$ and $k$, but
$(\alpha,\beta)\notin\mathcal D_\uparrow$. If
$(\alpha,\beta)\in\mathcal U$, this contradicts
Proposition~\ref{prop:spectral-failure}. Otherwise
$(\alpha,\beta)\in\mathcal D_\downarrow\setminus
\mathcal D_\uparrow$, and Corollary~\ref{cor:second-conjecture}
gives the strict opposite inequality. Thus
$(\alpha,\beta)\in\mathcal D_\uparrow$.

The necessity of the condition for the second inequality is proved in
the same way, using Corollary~\ref{cor:reverse} on
$\mathcal D_\uparrow\setminus\mathcal D_\downarrow$. This proves both
equivalences.
\end{proof}

\section{Bounds in terms of Bessel zeros}
\label{sec:bessel-bounds}

The affine factor in~\eqref{eq:affine-zero-limit} does not alter the
limiting Bessel zero. The comparison theorems convert this asymptotic
relation into one-sided bounds valid at every finite degree. Such
one-sided convergence was one of the motivations for Gautschi's
scaled-zero conjectures; see~\cite[Section~1]{Gautschi2009b}.

\begin{corollary}\label{cor:bessel-bounds}
Let $n\geq1$ and $1\leq k\leq n$.
\begin{enumerate}
\item If $\alpha,\beta>-1$, $|\beta|\leq1/2$, and
$-1<\sigma\leq\sigma_\uparrow(\alpha,\beta)$,
then
\[
\theta_{n,k}^{(\alpha,\beta)}
\leq
\frac{j_{\alpha,k}}{n+\sigma}.
\]
\item If $\alpha,\beta>-1$, $|\beta|\geq1/2$, and
$\sigma\geq\sigma_\downarrow(\alpha,\beta)$,
then
\[
\theta_{n,k}^{(\alpha,\beta)}
\geq
\frac{j_{\alpha,k}}{n+\sigma}.
\]
\end{enumerate}
In either part the inequality is strict unless
\[
|\alpha|=|\beta|=\frac12, \quad
\sigma=\frac{\alpha+\beta+1}{2},
\]
in which case equality holds for every admissible $n$ and $k$.
\end{corollary}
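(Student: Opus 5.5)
The plan is to combine the monotonicity of the scaled zeros from Theorems~\ref{thm:increasing} and~\ref{thm:decreasing} with the limit~\eqref{eq:affine-zero-limit}. For part~(1), fix $k$ and $\sigma$ with $-1<\sigma\leq\sigma_\uparrow(\alpha,\beta)$. For every $m\geq k$, Theorem~\ref{thm:increasing} gives
\[
(m+\sigma)\theta_{m,k}^{(\alpha,\beta)}\leq(m+1+\sigma)\theta_{m+1,k}^{(\alpha,\beta)}.
\]
The sequence $m\mapsto(m+\sigma)\theta_{m,k}$, for $m\geq k$, is therefore nondecreasing. By~\eqref{eq:affine-zero-limit} it converges to $j_{\alpha,k}$. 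Hence $(n+\sigma)\theta_{n,k}\leq j_{\alpha,k}$ for each $n\geq k$. Dividing by $n+\sigma>0$ gives the bound. Part~(2) is the mirror image. Theorem~\ref{thm:decreasing} makes the same sequence nonincreasing, with $n+\sigma>0$ as noted in its proof. Passing to the limit gives $(n+\sigma)\theta_{n,k}\geq j_{\alpha,k}$.

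For strictness, assume we are not in the exceptional case $|\alpha|=|\beta|=1/2$, $\sigma=\rho$. Then Theorem~\ref{thm:increasing} (or Theorem~\ref{thm:decreasing}) says each step of the chain is strict. So for fixed $n$,
\[
(n+\sigma)\theta_{n,k}<(n+1+\sigma)\theta_{n+1,k}\leq\lim_{m\to\infty}(m+\sigma)\theta_{m,k}=j_{\alpha,k},
\]
and the reverse chain holds in part~(2). A single strict step followed by the monotone limit is enough, so no uniform gap is needed.

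In the exceptional case the proofs of the two theorems show that $Z_{n,h_n}$ is a positive multiple of $\cos t$ when $\alpha=-1/2$ and of $\sin t$ when $\alpha=1/2$. The scaled zeros are then $(k-1/2)\pi$ and $k\pi$ respectively. These equal $j_{-1/2,k}$ and $j_{1/2,k}$, because $J_{-1/2}(z)$ and $J_{1/2}(z)$ are multiples of $\cos z/\sqrt z$ and $\sin z/\sqrt z$. Equality therefore holds for all $n$ and $k$.

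The main point needing care is the limit identification~\eqref{eq:affine-zero-limit}. It is already established via Mehler--Heine and Hurwitz, so it can be used directly. One should only check that it applies for each fixed $k$ along the tail $m\geq k$, and that in part~(1) with $\sigma<0$ the quantity $n+\sigma$ remains positive; both hold since $\sigma>-1$ and $n\geq1$. I do not expect a genuine obstacle; the only subtlety is that strictness comes from the first strict step rather than from the limit.
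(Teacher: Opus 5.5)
Your proposal is correct and follows essentially the same route as the paper: monotonicity of $(m+\sigma)\theta_{m,k}^{(\alpha,\beta)}$ from Theorems~\ref{thm:increasing} and~\ref{thm:decreasing}, passage to the limit via~\eqref{eq:affine-zero-limit}, and strictness from the strict consecutive steps. The only difference is in the exceptional case, where you explicitly match the zeros $(k-\tfrac12)\pi$ and $k\pi$ with $j_{-1/2,k}$ and $j_{1/2,k}$. The paper instead observes that the sequence is constant and must therefore equal its limit $j_{\alpha,k}$.
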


\begin{proof}
Fix $n$ and $k$, and, for $m\geq n$, put
\[
a_m\coloneqq
(m+\sigma)\theta_{m,k}^{(\alpha,\beta)}.
\]
Under the assumptions of the first part,
Theorem~\ref{thm:increasing} gives
\[
a_m\leq a_{m+1},
\quad
m\geq n.
\]
By~\eqref{eq:zero-limit},
\[
\lim_{m\to\infty}a_m=j_{\alpha,k},
\]
and hence
\[
a_n\leq j_{\alpha,k}.
\]
This is the first assertion.

Under the assumptions of the second part,
Theorem~\ref{thm:decreasing} gives instead
\[
a_{m+1}\leq a_m,
\quad
m\geq n.
\]
The same limiting relation now yields
\[
a_n\geq j_{\alpha,k},
\]
which proves the second assertion.

Outside the exceptional case, the corresponding comparison theorem
makes every consecutive inequality strict. A strictly increasing
sequence converging to $j_{\alpha,k}$ lies strictly below its limit,
whereas a strictly decreasing one lies strictly above it. In the
exceptional case the sequence is constant, and its common value must
equal its limit $j_{\alpha,k}$. This proves the equality statement.
\end{proof}

For the upper bound in Corollary~\ref{cor:bessel-bounds}, the
right-hand side decreases as $\sigma$ increases. The strongest choice
provided by Theorem~\ref{thm:increasing} is therefore
\[
\sigma=\sigma_\uparrow(\alpha,\beta).
\]
For the lower bound, the strongest choice is the smallest admissible
value,
\[
\sigma=\sigma_\downarrow(\alpha,\beta).
\]
When admissible in the relevant comparison theorem, the choices
$\sigma=0$ and $\sigma=\rho$ recover the distinguished denominators
$n$ and
\[
\gamma_n=n+\rho,
\]
respectively.

A particularly transparent two-sided estimate arises at the common
parameter-boundary point
\[
(\alpha,\beta)=\left(0,\frac12\right).
\]
Here
\[
\rho=\frac34,
\quad
A=\frac1{16},
\quad
B=0,
\]
so that
\[
\Phi_{0,1/2}(x)=\frac1{16}R(x).
\]
Since $R(x)>0$ on $(0,\pi/2)$ and
\[
\sup_{0<x<\pi/2}R(x)=1,
\]
the two affine thresholds are
\[
\sigma_\uparrow\left(0,\frac12\right)=\frac34, \quad
\sigma_\downarrow\left(0,\frac12\right)
=
\frac34+
\frac{1/16}{1+3/4}
=
\frac{11}{14}.
\]
Corollary~\ref{cor:bessel-bounds} therefore gives
\begin{equation}
L_{n,k}
\coloneqq
\frac{j_{0,k}}{n+11/14}
<
\theta_{n,k}^{(0,1/2)}
<
\frac{j_{0,k}}{n+3/4}
\eqqcolon
U_{n,k},
\label{eq:two-sided-example}
\end{equation}
for every $n\geq1$ and $1\leq k\leq n$. The length of this
enclosure is given exactly by
\begin{equation}
U_{n,k}-L_{n,k}
=
\frac{j_{0,k}}
{28(n+3/4)(n+11/14)}.
\label{eq:two-sided-width}
\end{equation}
Moreover, since $\theta_{n,k}^{(0,1/2)}>L_{n,k}$, one obtains the
uniform relative estimate
\begin{equation}
0<
\frac{U_{n,k}-L_{n,k}}
     {\theta_{n,k}^{(0,1/2)}}
<
\frac{1}{28n+21}.
\label{eq:relative-width}
\end{equation}
Thus the relative width is $O(n^{-1})$, uniformly with respect to the
zero index $k$. Table~\ref{tab:two-sided-bounds} illustrates this
enclosure for zeros situated near the beginning, middle, and end of
the zero set.

\begin{table}[htbp]
\centering
\caption{Representative two-sided enclosures from~\eqref{eq:two-sided-example}. The entries in the
$\theta_{n,k}^{(0,1/2)}$ column are numerical approximations to the
angular zeros; the relative width is
$(U_{n,k}-L_{n,k})/\theta_{n,k}^{(0,1/2)}$.}
\label{tab:two-sided-bounds}

\renewcommand{\arraystretch}{1.10}
\setlength{\tabcolsep}{7pt}

\begin{tabular}{rrcccc}
\toprule
\rowcolor{gray!15}
$n$ & $k$
& $L_{n,k}$
& $\theta_{n,k}^{(0,1/2)}$
& $U_{n,k}$
& Relative width \\
\midrule

2  & 1  & 0.863270713 & 0.873269898 & 0.874482021 & $1.284\%$ \\
2  & 2  & 1.981566501 & 2.004353607 & 2.007301131 & $1.284\%$ \\

\addlinespace[3pt]
\rowcolor{gray!6}
5  & 1  & 0.415648862 & 0.418098575 & 0.418230532 & $0.617\%$ \\
\rowcolor{gray!6}
5  & 3  & 1.495706059 & 1.504503879 & 1.504996159 & $0.617\%$ \\
\rowcolor{gray!6}
5  & 5  & 2.580652443 & 2.595755459 & 2.596681341 & $0.618\%$ \\

\addlinespace[3pt]
10 & 1  & 0.222963959 & 0.223684530 & 0.223704703 & $0.331\%$ \\
10 & 5  & 1.384323496 & 1.388793222 & 1.388922578 & $0.331\%$ \\
10 & 10 & 2.840294639 & 2.849430950 & 2.849730834 & $0.331\%$ \\

\bottomrule
\end{tabular}
\end{table}

The angular zeros in the central column were computed from the zeros
of $P_n^{(0,1/2)}$ using 80-digit working precision. The computation
was independently checked by forming the Jacobi matrix associated
with the three-term recurrence and diagonalising it; the resulting
eigenvalues were converted to angles by $\arccos$. All numerical
entries were then rounded to the number of decimal places displayed.
The near-constant relative widths illustrate the uniformity in $k$
already quantified by~\eqref{eq:relative-width}, including for
indices that vary with the degree.

\section{Concluding remarks}

The affine scale $h_n=n+\sigma$ places Gautschi's two conjectures
within a single differential comparison. On their respective
parameter domains, setting $\sigma=0$ gives the first conjecture and
makes the rescaled potential decrease with $h$, whereas setting
$\sigma=\rho$ gives the second and makes the rescaled potential
increase with $h$. The
quantities
$\sigma_\uparrow(\alpha,\beta)$ and
$\sigma_\downarrow(\alpha,\beta)$ are the exact thresholds for these
two pointwise monotonicity properties. They are therefore sharp for
the continuous potential comparison developed here. For general
affine scales no claim is made that they are necessary for the
corresponding inequalities between individual zeros. Nevertheless,
Corollary~\ref{cor:increasing-affine-classification} shows that
$|\beta|\leq1/2$ is necessary and sufficient for the existence of some
fixed affine shift producing an increasing ordering for every degree
and zero index. For the spectral scale $\sigma=\rho$,
Corollary~\ref{cor:spectral-classification} shows that the parameter
regions $\mathcal D_\uparrow$ and $\mathcal D_\downarrow$ are exact.

For the spectral scale, the identity
\[
\Phi_{\alpha,\beta}(x)
=
(A+3B)R(x)+B\bigl(T(x)-3R(x)\bigr)
\]
and the inequality $T(x)>3R(x)$ make the geometry of the parameter
regions transparent. The condition
\[
A+3B=0
\]
is equivalent to the ellipse
\[
\alpha^2+3\beta^2=1,
\]
while $B=0$ corresponds to $|\beta|=1/2$. These two boundaries
delimit the two exact spectral-ordering regions. Their intersections
are the four points
\[
|\alpha|=|\beta|=\frac12,
\]
which are precisely the cases in which the rescaled potential is
constant and equality holds. Together with the Mehler--Heine limit,
the comparison also gives the finite-degree Bessel-zero bounds in
Corollary~\ref{cor:bessel-bounds}.

The endpoint analysis is an essential part of the argument. Each
scaled Jacobi-polynomial solution $Z_{n,h}$ has leading Frobenius
behaviour
\[
C_{n,h}t^{\alpha+1/2}
\]
at the origin. When $-1<\alpha<-1/2$, the transformed solutions are
unbounded there, so no finite endpoint value can be assigned to them.
Nevertheless, the leading terms cancel in the boundary Wronskian,
which satisfies
\[
Z_{m,h}'Z_{\ell,g}-Z_{m,h}Z_{\ell,g}'
=
O\bigl(t^{2\alpha+2}\bigr).
\]
Since $\alpha>-1$, this expression tends to zero. This cancellation
is exactly what the singular Sturm comparison requires and allows
this endpoint comparison step to remain valid throughout the full
Jacobi range $\alpha,\beta>-1$ without any implicit boundedness
assumption at the endpoint.

In the two remaining sign configurations,
$B$ and $A+3B$ have strictly opposite signs. Consequently,
$\Phi_{\alpha,\beta}$ changes sign on $(0,\pi/2)$ and the pointwise
potential comparison has no uniform direction.
Proposition~\ref{prop:spectral-failure} shows that this reflects a
genuine loss of spectral ordering: for every parameter point in
$\mathcal U$, the first and last comparable scaled zeros satisfy
opposite inequalities for all sufficiently large degrees. Thus
Corollary~\ref{cor:spectral-classification} gives a complete parameter
classification for the spectral scale.

\section*{Acknowledgements}
K.~Castillo acknowledges financial support from the Centre for
Mathematics of the University of Coimbra (CMUC), funded by the
Portuguese Foundation for Science and Technology (FCT), under the
projects UID/00324/2025 and UID/PRR/00324/2025. K.~Castillo also
acknowledges financial support from the FCT under the grant
2022.00143.CEECIND/CP1714/CT0002.

\end{document}